\documentclass[12pt]{article}
\usepackage{amsmath,amsthm,amssymb,color,amscd}

\def\vv{{\underline{v}}}

\def\nunu{\underline{\nu}}
\def\tt{{\underline{t}}}

\newcommand{\Var}{\ensuremath{\mathcal{V}_{\mathbb{C}}}}
\def\1{\underline{1}}
\def\0{\underline{0}}
\def\R{\mathbb R}
\def\P{\mathbb P}

\def\L{\mathbb L}

\def\LLL{\mathbb L}
\def\Z{\mathbb Z}

\def\C{\mathbb C}
\def\K{\mathbb K}

\def\J{\mathbb J}
\def\AA{{\mathcal A}}
\def\OO{{\mathcal O}}
\def\EE{{\mathcal E}}

\def\CC{{\mathcal C}}

\def\calE{{\mathcal E}}

\newtheorem{theorem}{Theorem}

\newtheorem{lemma}{Lemma}

\theoremstyle{definition}
\newtheorem{definition}{Definition}
\newtheorem{remark}{Remark}

\newtheorem{example}{Example}

\title{On a generalized Poincar\'e series of plane valuations
\footnote{Math. Subject Class. 2020: 16W60, 14B05, 14G27.
Keywords: generalized Poincar\'e series, plane valuations,  subfields of complex
numbers.
}
}

\author{
F.~Delgado, \thanks{The
first author was partially supported by the
grant PID2022-138906NB-C21 funded by
MICIU/AEI/10.13039/501100011033 and by ERDF/EU.}
\and S.M.~Gusein-Zade\thanks{
The work of the second author is an output of a research project implemented as part of the Basic Research Program at the National Research University Higher School of Economics (HSE University).
} }

\date{}

\begin{document}

\def\eps{\varepsilon}

\maketitle

\begin{abstract}
Earlier, there were defined two generalized (``motivic'') versions of the
Poincar\'e series of a collection of plane valuations on the algebra
$\OO_{\C^2,0}$ of germs of holomorphic functions in two variables.
One of them was defined
as an integral with respect to the generalized Euler characteristic over the
projectivization
of the extended semigroup of the collection. One has a natural version of it
for valuations on the algebra $\EE_{\K^2,0}$ of germs of holomorphic functions
in two variables whose Taylor coefficients are from a fixed subfield $\K$ of the
field $\C$ of complex numbers.
In this setting the usual Poincar\'e series were computed for one plane curve or
divisorial valuation on $\EE_{\K^2,0}$.
We give equations for the corresponding generalized Poincar\'e series.
\end{abstract}

%%%%%%%%%%%%%%%%%%%%%%%%%%%%%%%%
\section{Introduction}\label{sec:intro} %% 1
%%%%%%%%%%%%%%%%%%%%%%%%%%%%%%%%
In some cases, the adjective ``generalized'' with respect to some invariants is used in the following sense.
These invariants are (or can be) defined in terms of the Euler characteristic.
The Euler characteristic (defined as the alternating sum of the ranks of the cohomology groups with compact support)
is an additive invariant of topological spaces.
For some classes of spaces there are other (more fine) additive invariants.
For example, the Hodge--Deligne polynomial is an additive invariant
on the class of complex quasiprojective varieties.
The most fine (universal) additive invariant on this class of spaces
is the class $[X]$ of a variety $X$ in the Grothendieck ring
$K_0(\Var)$ of complex quasiprojective varieties.
The class $[X]$ is sometimes called {\em the generalized Euler characteristic} of $X$ (and sometimes, say, in the context of motivic integration, denoted by $\chi_g(X)$).
If, in a definition of an invariant, the usual Euler characteristic $\chi(\bullet)$ is, in a way, exchanged by the generalized Euler characteristic $\chi_g(\bullet)$,
the obtained invariant may be called a generalized (or motivic) one.
As examples, one can indicate the motivic monodromy zeta function \cite{DLzeta} (in this case the transition from the usual Euler characteristic to the generalized one is not very straightforward) or the generalized orbifold Euler characteristic \cite{GLM}.

An invariant which can be expressed in terms of the Euler characteristic is the Poincar\'e series of a collection of valuations on the algebra $\OO_{\C^2,0}$
of germs of holomorphic functions on the complex plane.
It was defined (in other terms) in~\cite{CDK} and
reformulated in terms of (different) integrals with respect to the Euler characteristic in~\cite{CDG-RMS-2000}
and~\cite{Duke}. Two versions of the
generalized Poincar\'e series corresponding to these two formulations were defined in~\cite{Monats}.
One of them ($P_g(\tt)$, $\tt=(t_1, \ldots, t_r)$, $r$ is
the number of valuations in the collection) was defined
as an integral with respect to the generalized Euler characteristic
(properly defined) 
over the (infinite-dimensional) projectivization
of the space of germs of functions.
The other one ($\widehat{P}_g(\tt)$)
was defined as an integral with respect to the generalized Euler characteristic
over the projectivization of the so called extended semigroup of the collection
of valuations. The coefficients of the second one
are polynomials in the class $\LLL$ of the complex
affine line (and therefore it is reasonable to devote it by
$\widehat{P}(\tt; \LLL)$); the coefficients of the first one are
polynomials in the inverse $q=\LLL^{-1}$ of the class $\LLL$ (therefore it is reasonable to devote it by
$P(\tt; q)$). (For some convenience of further considerations we omit here
the lower index $g$ at $P$ (for ``generalized'') used in~\cite{Monats}. The presence of the arguments $\LLL$ and $q$ already indicates that one consideres generalized versions
of the Poincar\'e series.) Both of these series appeared to be rather complicated
for computations.

The paper~\cite{Monats}
was mainly devoted to a computation
of the generalized Poincar\'e series
$P(\tt; q)$ for a collection of curve or
divisorial plane valuations. There were given complicated
(one can say~--- not really computable) equations for it.
This series appeared to be related with the so called St\"ohr zeta function~\cite{Stohr}
and the notion can be extended to arbitrary fields, say, to finite ones:~\cite{Moyano-Zuniga}.
Further computations (reformulations) of the equation for this series were made
in~\cite{GN}. It was shown that, for a collection of plane curve valuations,
this series is related with the generating series of the ranks of the Heegaard-Floer homologies of the corresponding algebraic link.
We shall not consider this version of the generalized Poincar\'e series in this paper.

In~\cite{Monats}, the series $\widehat{P}(\tt; \LLL)$ was computed only for one (somewhat artificial) situation.
Let $\pi:(W,D)\to(\C^2,0)$ be a modification
of the plane $(\C^2,0)$ by a sequence of blow-ups. Let $E_{\sigma}$, $\sigma\in\Gamma$,
be (all) the components of the exceptional divisor $D$, and let $\nu_{\sigma}$ be the
divisorial valuations corresponding to them. There was computed the series
$\widehat{P}(\tt; \LLL)$ for the collection of {\bf all} the divisorial valuations $\nu_{\sigma}$.
There were no equations for the series $\widehat{P}(\tt; \LLL)$
for several curve or divisorial plane valuations.
For one plane curve valuation, the series $\widehat{P}(t; \LLL)$ coincides with the
usual, non-generalized, Poincar\'e series (i.\,e., does not depend on $\LLL$).
This is so because, in this case, all the coefficients of the usual Poincar\'e series $P(t)$
are equal to $0$ or $1$.
For one divisorial valuation, this in not the case. Therefore the computation
of the generalized Poincar\'e
series for a divisorial valuation makes sense. However, this case was not considered yet (see Section~\ref{sec:general_divisor} below).

All the considerations in~\cite{Monats} were for valuations on the algebra
$\OO_{\C^2,0}$ (or, equivalently, on the algebra $\C[[x,y]]$ of formal power series).
The situation becomes different (and more complicated) if one considers valuations
on the algebra $\calE_{\K^2,0}$ of germs of holomorphic functions whose Taylor coefficients
are from a (fixed) subfield $\K$ of the field $\C$ of complex numbers (say, from the field
$\R$ of real numbers) or, equivalently, on the algebra $\K[[x,y]]$ of formal power series:~\cite{BLMS},
\cite{MMJ}, \cite{Extensions}.
In this setting, the (classical) Poincar\'e series for one plane curve
or divisorial valuation was computed in~\cite{Extensions}
(for $\K=\R$ in~\cite{BLMS} and~\cite{MMJ}).
In this case, the coefficients in the Poincar\'e series
of one curve valuation may be greater than $1$.
E.\,g., for $\K=\R$ they are equal to $0$, $1$, or $2$.
Here we give equations for the generalized Poincar\'e series
$\widehat{P}(t; \LLL)$ for one plane curve or divisorial valuation obtaining more fine versions of the results of~\cite{Extensions}.

%%%%%%%%%%%%%%%%%%%%%%%%%%%%%%
\section{Generalized Poincar\'e series of valuations over a $\C$-algebra}\label{sect:General} %% 2
%%%%%%%%%%%%%%%%%%%%%%%%%%%%%%
A (discrete, rank one) valuation on a $\C$-algebra $\AA$ is a map
$\nu: \AA\to \Z_{\ge 0}\cup \{+\infty\}$ such that
\begin{enumerate}
\item[1)] $\nu(0)=+\infty$;
\item[2)] $\nu(\lambda f)=\nu(f)$ for $f\in \AA$,
$\lambda\in\C^*:=\C\setminus\{0\}$;
\item[3)] $\nu(f_1+f_2)\ge\min\{\nu(f_1),\nu(f_2)\}$ for $f_1,f_2\in \AA$;
\item[4)] $\nu(f_1 f_2)=\nu(f_1)+\nu(f_2)$ for $f_1,f_2\in \AA$.
\end{enumerate}

Let us recall the definition of the Poincar\'e series of a collection of valuations
on a $\C$-algebra %% from~\cite{CDK}
in terms of the extended semigroup: \cite{Duke}. Let $\nu_1$,
\dots, $\nu_r$ be (discrete rank one) valuations on a
$\C$-algebra $\AA$.
For $\vv=(v_1, \ldots, v_r)\in\Z_{\ge0}^r$, let
$J(\vv)=\{f\in\AA:\nunu(f)\ge\vv\}$,
where $\nunu(f)=(\nu_1(f), \cdots, \nu_r(f))$,
$\nunu(f)\ge\vv$ means that
$\nu_i(f)\ge v_i$ for all $i$.
Let $I_0=\{1, \ldots, r\}$,
$\1=(1,\ldots, 1)\in\Z_{\ge 0}^r$;
for $I\subset I_0$, let
$\1_I\in\Z^r$ be the $r$-tuple
whose $i$\,th component is equal to $1$ if $i\in I$ and is equal to $0$ otherwise.
Let
\begin{equation*}
F_{\vv}:=\left(J(\vv)/ J(\vv+\1)\right)\left\backslash
\bigcup_{I\subset I_0, I\ne \emptyset}
\left(
J(\vv+\1_I)/ J(\vv+\1)
\right)
\right.\,.
\end{equation*}
One can see that
\begin{equation*}
F_{\vv}:=\left(J(\vv)/ J(\vv+\1)\right)\left\backslash
\bigcup_{i=1}^r
\left(
J(\vv+\1_i)/ J(\vv+\1)
\right)
\right.\,,
\end{equation*}
where $\1_i$ is the $r$-tuple with the $i$th component equal
to $1$ and all other compunents equal to $0$.

The {\em extended semigroup}
of the collection of valuations $\{\nu_i\}$ is
$$
\widehat{S}_{\{\nu_i\}}
=\bigcup_{\vv\in\Z_{\ge0}^r}
F_{\vv}\,.
$$
The operation in $\widehat{S}_{\{\nu_i\}}$
is induced by the multiplication in $\AA$.
The spaces $F_{\vv}$ are called {\em fibres of the extended semigroup}.
Each of them is the complement to an arrangement of vector subspaces in a
complex vector space.

The Poincar\'e series $P(\tt)$
of the collection $\{\nu_i\}$
can be defined (or expressed)
by the equation
\begin{equation}\label{eqn:class_Poincare}
P(\tt)=\sum_{\vv\in\Z_{\ge0}^r}\chi(\P F_{\vv})\cdot\tt^{\vv}
\in\Z[[t_1,\ldots, t_r]]\,,
\end{equation}
where $\tt^{\vv}=t_1^{v_1}\cdot\ldots\cdot t_r^{v_r}$,
$\P F_{\vv}:=F_{\vv}/\C^*$
($\C^*=\C\setminus\{0\}$)
is the projectivization of $F_{\vv}$.
The equation~(\ref{eqn:class_Poincare}) makes sense if all the quotients $J(\vv)/J(\vv+\1)$
are finite-dimensional.

Let $\AA=\OO_{\C^2,0}$ be the algebra of germs of holomorphic functions in two variables.
There are essentially two classes of discrete,
rank one valuations on it: curve valuations and divisorial
ones (see, e.\,g., \cite{Spiv}). For a collection of
curve or divisorial valuations on $\OO_{\C^2,0}$,
all the quotients $J(\vv)/J(\vv+\1)$
are finite-dimensional. Thus Equation~(\ref{eqn:class_Poincare}) makes sense.
For $r=1$, that is for one valuation, the fibres $F_{v}$, $v\in\Z_{\ge 0}$,
are punctured (finite dimentional) complex affine spaces. The Euler characteristic of
the projectivization of a complex affine space is equal to its dimension. Thus in this case
Equation~(\ref{eqn:class_Poincare}) reduces to the standard one:
\begin{equation}\label{eqn:Poincare_for_one}
P(t)=\sum_{v=0}^{\infty} \dim_{\C}(J(v)/J(v+1))\cdot t^v\in \Z[[t]].
\end{equation}
For a curve valuation, all the coefficients in (\ref{eqn:Poincare_for_one})
are equal to $0$ or $1$.

Each space $\P F_{\vv}$
is the complement to an arrangement of projective subspaces in a
(finite-dimensional) projective space.
Therefore its generalized Euler characteristic
$\chi_g(\P F_{\vv})$
(the class
$[\P F_{\vv}]\in K_0(\Var)$)
is a polynomial in $\LLL$.
The {\em generalized Poincar\'e series} of the collection $\{\nu_i\}$
is defined by
\begin{equation}\label{eqn:general_Poincare}
  P(\tt;\LLL)=\sum_{\vv\in\Z_{\ge0}^r}\chi_g(\P F_{\vv})\cdot\tt^{\vv}
\in\Z[[t_1,\ldots, t_r;\LLL]]\,.
\end{equation}
(From now on, we shall consider only this version of the generalized Poincar\'e series, and therefore we shall omit the hat over $P$ used
in~\cite{Monats} and in the Introduction.)

\begin{example}
 1) Let us consider two plane curve valuations defined by smooth branches transversal to
 each other. The classical Poincar\'e series $P(t_1,t_2)$ is equal to $1$.
 The semigroup of values in $\Z^2_{\ge 0}$ consists of the origin $(0,0)$ and of
 all the points $(v_1,v_2)$ from $\Z^2_{> 0}$. The fibre $F_{\vv}$ of the extended semigroup is $\C^*=\C\setminus \{0\}$ for $\vv=(0,0)$ and is $(\C^*)^2$ for all
 other $\vv$ in the semigroup of values. The generalized Euler characteristics of their projectivizations are equal to $1$ in the first case and to $\LLL-1$ in the
 second. Therefore one has
 $$
 P(t_1,t_2;\LLL)=1+t_1t_2\frac{\LLL-1}{(1-t_1)(1-t_2)}=
 \frac{1-t_1-t_2+\LLL t_1t_2}{(1-t_1)(1-t_2)}.
 $$

 2) Let us consider the modification of $(\C^2,0)$ by two sequential blow-ups
 and let us consider two divisorial valuations corresponding to the components
 of the exceptional divisor. According to~\cite[Theorem 3]{Monats}, the generalized
 Poincar\'e series is equal to
 $$
 P(t_1,t_2;\LLL)=\frac{1-t_1t_2-t_1t_2^2+\LLL t_1^2t_2^3}{(1-t_1t_2)(1-t_1t_2^2)(1-\LLL t_1t_2)(1-\LLL t_1t_2^2)}.
 $$

 For a little bit more complicated collections of curves or of divisors,
 the equations become much more involved.
 %% Therefore, at the moment, one can see no approach to computation of this generalized Poincar\'e series for an arbitrary collection of valuations.
\end{example}

%%%%%%%%%%%%%%%%%%%%%%%%%%%%%%%%
\section{The generalized Poincar\'e series of valuations on functions over a subfield of $\C$}
\label{sec:over_a_subfield} %% 3
%%%%%%%%%%%%%%%%%%%%%%%%%%%%%%%%
Let $\K$ be a subfield of the field $\C$ of complex numbers and let $\calE_{\K^2,0}$
be the algebra of germs of holomorphic functions on the (complex) plane
whose Taylor coefficients are from $\K$. Valuations on $\calE_{\K^2,0}$
(or, equivalently, on the algebra $\K[x,y]$ of formal power series)
and their Poincar\'e series were considered in~\cite{Extensions}.
A valuation on the algebra $\OO_{\C^2,0}$ of germs of holomorphic functions in two variables
defines a valuation on its subalgebra $\calE_{\K^2,0}$. In fact, each (discrete rank one) valuation
on $\calE_{\K^2,0}$ is the restriction of a valuation on $\OO_{\C^2,0}$: see, e.\,g.,~\cite{Endler}.

If $\AA$ is a $\K$-algebra (e.\,g., $\K[x,y]$) and $\nu_1$, \dots, $\nu_r$ are (discrete rank one) valuations on $\AA$,
the Euler characteristic of $\P F_{\vv}$
$$
(F_{\vv}=\left(J^{\K}(\vv)/J^{\K}(\vv+\1)\right)\setminus
\bigcup_{i=1}^r \left(J^{\K}(\vv+\1_i)/J^{\K}(\vv+\1)\right)\,,
$$
$\J^{\K}(\vv)=\{f\in\AA:\nunu(f)\ge\vv\}$) and thus Equations~(\ref{eqn:class_Poincare}) and~(\ref{eqn:general_Poincare}),
in general, make no sense. However, if the $\K$-vector spaces $J^{\K}(\vv)/J^{\K}(\vv+\1)$ are finite-dimensional,
for the classical Poincar\'e series (an analogue of~(\ref{eqn:class_Poincare})),
one can use the initial definition from~\cite{CDK}. Also in this case
one can define the generalized Euler characteristic
$\chi_g^{\K}(\P F_{\vv})$ as a polynomial in $\LLL$, where $\LLL$ is
the ``class'' of a one-dimensional affine $\K$-space.
This means that one defines the generalized Euler characteristic $\chi_g^{\K}(\K^s)$ of
the affine $s$-dimensional $\K$-space as
$\LLL^s$ and extends the definition to the complements to arrangements of projective subspaces in projective $\K$-spaces using the inclusion-exclusion principle.
In particular, one has
$\chi_g^{\K}(\P\K^s)=\frac{\LLL^{s}-1}{\LLL-1} = 1+\LLL+\ldots+\LLL^{s-1}$.

It is not difficult to see that this notion is well defined.
In other terms, one can define $\chi_g^{\K}(\P F_{\vv})$
by the equation
\begin{equation*}
  \chi_g^{\K}(\P F_{\vv})=\sum_{I\subset I_0}(-1)^{\#I}
  \frac{\LLL^{\dim_{\K}\left(J^{\K}(\vv+\1_I)/J^{\K}(\vv+\1))\right)}-1}{\LLL-1}\,.
\end{equation*}

\begin{definition}
  The {\em generalized Poincar\'e series} of the collection of valuations $\{\nu_i\}$ on $\EE_{\K^2,0}$ is defined by
  \begin{equation}\label{eqn:K_Poincare}
  P^{\K}(\tt;\LLL)=\sum_{\vv\in\Z_{\ge0}^r}\chi_g^{\K}(\P F_{\vv})\cdot\tt^{\vv}
\in\Z[[t_1,\ldots, t_r;\LLL]]\,.
\end{equation}
\end{definition}
For one valuation Equation~(\ref{eqn:K_Poincare}) reduces to
\begin{equation}\label{eqn:K_Poincare_for_one}
 P^{\K}(t;\LLL)=\sum_{v=0}^{\infty}
 \frac{\LLL^{\dim_{\K}(J^{\K}(v)/J^{\K}(v+1))}-1}{\LLL-1}\cdot t^v\,.
\end{equation}

%%%%%%%%%%%%%%%%%%%%%%%%%%%%%%%%
\section{On relations between the usual and the generalized Poincar\'e series}
\label{sec:relations} %% 4
%%%%%%%%%%%%%%%%%%%%%%%%%%%%%%%%
In one direction, the
relation between the usual and the generalized Poincar\'e series is clear. The
generalized Poincar\'e series always determines the usual one, namely,
$P(\tt)=P(\tt,1)=P(\tt;\LLL)_{{\displaystyle{\vert}} \LLL\mapsto 1}$.
In the other direction, a relation is less clear.
In fact, for a ``mixed'' collection consisting both of plane curve and divisorial
valuations, the (classical) Poincar\'e series does not determine the generalized
one. (This follows from the example at the end of~\cite{FAOM}.)
For a collection consisting only of plane curve or only of divisorial valuations
on $\OO_{\C^2,0}$,
the classical Poincar\'e series determines the generalized one.
However, an explicit way to restore $P(\tt;\LLL)$ from
$P(\tt)$ is not known. For example, the coefficient
$0$ in the usual Poincar\'e series may mean $0$, or $\LLL-1$, or
$\LLL^2-1$, {\dots} in the generalized one.

For one valuation, a relation looks clear. Namely, if
\begin{equation}\label{eqn:99}
P(t)=\sum_{v=0}^{\infty}a_vt^v,
\end{equation}
then
%% $$
%% P_g(t;\LLL)=\sum_{v=0}^{\infty}(1+\LLL+\ldots+
%% \LLL^{a_v-1})t^v
%% \left(=\sum_{v=0}^{\infty}\frac{1-\LLL^{a_v}}{1-\LLL}\right)\,.
%% $$
$$
P(t;\LLL)=\sum_{v=0}^{\infty}(1+\LLL+\ldots+
\LLL^{a_v-1})\cdot t^v
\left(=\sum_{v=0}^{\infty}\frac{\LLL^{a_v}-1}{\LLL-1}\cdot t^v\right)\,.
$$
(Pay attention that, for any valuation,
the coefficients $a_v$
are non-negative.)
The problem is that
almost never the (usual) Poincar\'e series $P(t)$ is known as a power series with explicitly defined coefficients.
As a rule, the Poincar\'e series $P(t)$ is computed as a rational function in $t$.
Moreover, in the majority of cases it is expressed as
a finite product/ratio of the binomials $1-t^m$:
\begin{equation}\label{eqn:ACampo_form}
P(t)=\prod_{m\ge 1}(1-t^m)^{s_m}
\end{equation}
with $s_m\in\Z$.
One can describe an algorithm to find the coefficients
in the expression of the Poincar\'e series in the form~(\ref{eqn:99}), 
however, this does not lead to a closed formula for the
coefficients in the generalized Poincar\'e series.
Moreover, having an equation for the Poincar\'e series
of the form~(\ref{eqn:ACampo_form}), it looks natural
to try to get an (or rather the) expression for the
generalized Poincar\'e series in the form
\begin{equation}\label{eqn:gen_ACampo_form}
P(t;\LLL)=\prod_{m\ge 1, \ell\ge 0}(1-\LLL^{\ell}t^m)^{s_{m,\ell}}.
\end{equation}
For a general expression of the form~(\ref{eqn:ACampo_form}) this looks very difficult
(if possible).
Moreover, the algorithm of recovering the generalized Poincar\'e series from the usual one described at the beginning of this section can be applied only to power series with
non-negative coefficients. It is unclear when an
expression of the form~(\ref{eqn:ACampo_form})
represents a power series with non-negative coefficients.

All these make it reasonable not to try to restore
the generalized Poincare\'e series from the (computed) usual ones,
but to try to compute them somewhat independently.
The main aim of this paper is to adapt (or reformulate) the computations
from~\cite{Extensions} to the generalized Poincar\'e
series giving them for one plane curve or divisorial valuation on $\EE_{\K^2,0}$.

%%%%%%%%%%%%%%%%%%%%%%%%%%%%%%%%
\section{Poincar\'e series of a plane valuation on functions over a subfield of $\C$}
\label{sec:general_over_a_field} %% 5
%%%%%%%%%%%%%%%%%%%%%%%%%%%%%%%%
Let $\K$ be a subfield of the field $\C$ of complex numbers, and let $\EE_{\K^2,0}$ be the algebra of germs of holomorphic functions in two variables with the Taylor coefficients from $\K$. (Without changes it is possible to consider,
instead of the algebra $\EE_{\K^2,0}$, the algebra
$\K[[x,y]]$ of formal power series in two variables.)
A (discrete,
rank one) valuation on $\EE_{\K^2,0}$ is the restriction of a
valuation on $\OO_{\C^2,0}$. There are essentially two types of valuations: curve
valuations and divisorial ones.

\medskip

%\subsection*{Curve valuations.}
Let $(C,0)$ be a plane algebroid branch on
$(\C^2,0)$, that is $C$ (after an appropriate change of the coordinates in $\C^2$
defined over $\K$) is given by
\begin{equation}\label{eqn:curve}
x=x(\tau)=\tau^m,\\
y=y(\tau)=\sum_{i\ge m} c_i\tau^i\in \C[[\tau]]\,.
\end{equation}
The parametrization (\ref{eqn:curve}) is assumed to be irreducible, i.\,e., the greatest
common divisor of $m$ and of all
$i$ such that $c_i\ne 0$ is equal to $1$.
The branch $C$ defines a valuation
$\nu=\nu_C$ on the algebra $\OO_{\C^2,0}$ of holomorphic function germs in $x$ and $y$
and thus on the algebra $\EE_{\K^2,0}\subset \OO_{\C^2,0}$.  For
$f\in\OO_{\C^2,0}$, $\nu(f)$ is the degree of the leading term in the series
$f(x(\tau), y(\tau))\in\C[[\tau]]$:
\begin{equation}\label{eqn:curve_valuation}
 f(x(\tau), y(\tau))=a(f)\cdot\tau^{\nu(f)}+
 \text{ terms of higher degree}\;,
\end{equation}
where $a(f)\ne 0$.
If $f(x(\tau), y(\tau))\equiv 0$, $\nu(f):=+\infty$.
Valuations defined this way are called {\em (plane) curve valuations}.

%One can show that, for any plane valuation (a curve or a divisorial one),
%the quotients $J^{\K}(v)/J^{\K}(v+1)$ are finite dimensional (over $\K$)
%and thus Equation~(\ref{eqn:K_Poincare_for_one}) for the generalized
%Poincar\'e series makes sense.

Let us assume that all the coefficients $c_i$ in~(\ref{eqn:curve}) are algebraic over $\K$.
We do not discuss the case when(at least) one of the coefficients
is transcendental over $\K$. In that case the corresponding curve
valuation is in fact a multiple of a divisorial one: see below.

 %% If $\K=\C$, all the coefficients of the series~(\ref{eqn:K-Poincare}) are equal to $0$ or $1$.
 %% This is not the case, in general, for a proper subfield $\K\subset\C$ (see, e.\,g., \cite{BLMS} for $\K=\R$).

\medskip

%\subsection*{The $G$-resolution. }
Let $G$ be the Galois group of the extension $\C/\K$, i.\,e., the group of
automorphisms of $\C$ which are trivial on $\K$.
The group $G$ acts on branches (irreducible complex plane curve germs).
This action can be defined by any of the following three ways.
\begin{enumerate}
 \item[1)] Let a branch $\gamma$ be given by a parametrization
 $x=\tau^k$, $y=\sum\limits_{i\ge k} a_i\tau^i$. Then the branch $g\gamma$, $g\in G$,
is given by the parametrization
 $x=\tau^k$, $y=\sum\limits_{i\ge k} g(a_i)\tau^i$.
 \item[2)] Let a branch $\gamma$ be given by an equation
 $h(x,y)=0$, where $h(x,y)=\sum\limits_{i,j\ge 0} b_{ij}x^iy^j$.
 Then the branch $g\gamma$ is given by the equation $gh(x,y)=0$,
 where $gh(x,y):=\sum\limits_{i,j\ge 0} g(b_{ij})x^iy^j$.
 \item[3)] Let  a branch $\gamma$ be considered as a subset
 of $(\C^2,0)$: $(\gamma,0)\subset (\C^2,0)$. The group $G$
 acts on the plane $\C^2$ by $g(x,y)=(g(x),g(y))$.
 Then the branch $g\gamma$ as a subset of $(\C^2,0)$ is just
 the image of $\gamma$ under this action.
\end{enumerate}

From formal point of view, this description is not absolutely correct (being applied to 
$\EE_{\K^2,0}$); the series written here could be non-convergent. 
We can ignore this problem since, for a bounded values of valuations, we can treat
not functions or series, but jets of high order. 
This means that essentially we can work with polynomials.

\medskip 

A $G$-invariant resolution of the branch $C$ (see~\cite{Extensions}) exists if (and
only if) all the coefficients in~(\ref{eqn:curve})
are from one finite extension of the field $\C$.
Otherwise (if each of them is from a finite extension, but, in general, not all of
them together) there exists a $G$-invariant
resolution process of the branch $C$ (by an infinite sequence of blow-ups).
In the both cases this means the following.

On each step one has a modification of $(\C^2,0)$ endowed with an action of the group $G$.
If the modification is not a $G$-invariant resolution of the curve $C$, i.\,e., if
the total transform of the
curve $GC$ (the union of all the branches from the orbit of $C$ under the
$G$-action) is not a divisor with normal crossings (in particular, this takes place
if the $G$-orbit of $C$ is infinite),
one has to blow up all the intersection points of the strict transform of the curve
$GC$ with the exceptional
divisor simultaneously. These points form the $G$-orbit of a point of the exceptional divisor.
The fact that all the coefficients in~(\ref{eqn:curve}) are algebraic over $\K$
implies that
this orbit is finite. One gets a new modification. This process may either finish
at a moment when one arrives
to a $G$-invariant resolution of the branch $C$ (if all the coefficients are from
one finite extension of $\K$; for short, we shall call this case ``finite'')
or continue without an end (otherwise;
we shall call this case ``infinite'').

The dual graph of the (minimal) $G$-invariant resolution of $C$ (in the finite case) looks
like in Figure~\ref{fig1}.
\begin{figure}[h]
$$
\unitlength=0.50mm
\begin{picture}(120.00,110.00)(0,-30)
\thinlines
\put(-30,30){\circle*{2}}
\put(-40,24){{\scriptsize ${\bf 1}=\sigma_0$}}

\put(-30,30){\line(1,0){90}}
\put(60,30){\circle*{2}}

\put(-15,30){\line(0,-1){15}}
\put(-15,30){\circle*{2}}
\put(-15,15){\circle*{2}}
\put(-16.5,10){{\scriptsize$\sigma_1$}}
\put(-16,33){{\scriptsize$\tau_1$}}

\put(5,30){\line(0,-1){20}}
\put(5,30){\circle*{2}}
\put(5,10){\circle*{2}}
\put(6.5,7){{\scriptsize$\sigma_2$}}
\put(4,33){{\scriptsize$\tau_2$}}

\put(35,30){\line(0,-1){20}}
\put(35,30){\circle*{2}}
\put(35,10){\circle*{2}}
\put(36.5,7){{\scriptsize$\sigma_3$}}
\put(34,33){{\scriptsize$\tau_3$}}

\put(50,33){\scriptsize{$\rho_1$}}

\put(60,30){\line(1,1){40}}
\put(85,55){\line(1,-1){10}}
\put(85,55){\circle*{2}}
\put(100,70){\circle*{2}}

\put(100,70){\line(2,1){20}}
\put(120,80){\vector(1,0){10}}
\put(120,80){\circle*{2}}
\put(110,75){\line(1,-2){4}}
\put(120,80){\line(1,-2){4}}

\put(100,70){\line(2,-1){20}}
\put(120,60){\vector(1,0){10}}
\put(120,60){\circle*{2}}
\put(110,65){\line(-1,-2){4}}
\put(120,60){\line(-1,-2){4}}

\put(60,30){\line(1,-1){40}}
\put(100,-10){\circle*{2}}
\put(85,5){\line(-1,-1){10}}
\put(85,5){\circle*{2}}
\put(86,6){{\scriptsize$\tau_4$}}
\put(75,-5){\circle*{2}}
\put(73,-13){{\scriptsize$\sigma_4$}}

\put(98,-5){{\scriptsize$\rho_2$}}

\put(100,-10){\line(2,1){20}}
\put(120,0){\vector(1,0){10}}
\put(120,0){\circle*{2}}
\put(110,-5){\line(1,-2){4}}
\put(120,0){\line(1,-2){4}}

\put(100,-10){\line(2,-1){20}}
\put(120,-20){\vector(1,0){10}}
\put(120,-20){\circle*{2}}
\put(120,-20){\line(-1,-2){4}}
\put(130,-17){{\scriptsize $C$}}
\put(110,-15){\line(-1,-2){4}}
\put(116,-28){\circle*{2}}
\put(118,-15){{\scriptsize$\tau_6$}}
\put(110,-15){\circle*{2}}
\put(106,-23){\circle*{2}}
\put(98,-28){{\scriptsize$\sigma_5$}}
\put(108,-31){{\scriptsize$\sigma_6$}}

\put(60,30){\line(1,0){40}}
\put(85,30){\line(0,-1){13}}
\put(85,30){\circle*{2}}
\put(100,30){\circle*{2}}

\put(100,30){\line(2,1){20}}
\put(120,40){\line(1,-2){4}}
\put(120,40){\vector(1,0){10}}
\put(120,40){\circle*{2}}

\put(110,35){\line(1,-2){4}}
\put(100,30){\line(2,-1){20}}
\put(120,20){\line(-1,-2){4}}
\put(120,20){\vector(1,0){10}}
\put(120,20){\circle*{2}}
\put(110,25){\line(-1,-2){4}}

\end{picture}
$$
\caption{The graph $\Gamma$ of the minimal $G$-resolution.}
\label{fig1}
\end{figure}
If a $G$-resolution does not exist, i.\,e., in the infinite case, in an obvious way
one can define the graph $\Gamma$ of the resolution process. This graph is infinite
as well.

The Galois group $G$ acts on the graph $\Gamma$ and the quotient
$\check \Gamma$ by the action looks like in Figures~\ref{fig2} (in the finite case) or~\ref{fig3}
(in the infinite one).
\begin{figure}[h]
$$
\unitlength=0.50mm
\begin{picture}(120.00,40.00)(0,10)
%%%%%%%%%%%%%%%%%%%%%%%%%

\thinlines
\put(-30,30){\circle*{2}}
\put(-40,24){{\scriptsize ${\bf 1}=\sigma_0$}}

\put(-30,30){\line(1,0){20}}
\put(-8,30){\circle*{0.5}}
\put(-5,30){\circle*{0.5}}
\put(-2,30){\circle*{0.5}}
\put(1,30){\circle*{0.5}}
\put(4,30){\circle*{0.5}}
\put(5,30){\line(1,0){40}}
\put(-15,30){\line(0,-1){15}}
\put(-15,30){\circle*{2}}
\put(-15,15){\circle*{2}}
\put(-16.5,10){{\scriptsize$\sigma_1$}}
\put(-16,33){{\scriptsize$\tau_1$}}

\put(10,30){\line(0,-1){20}}
\put(10,30){\circle*{2}}
\put(10,10){\circle*{2}}
\put(11.5,7){{\scriptsize$\sigma_q$}}
\put(9,33){{\scriptsize$\tau_q$}}

\put(30,30){\circle*{2}}
\put(27,23){\scriptsize{$\rho_1$}}

\put(30,30){\line(1,1){10}}
\put(30,30){\line(1,2){8}}
\put(30,30){\line(0,1){10}}

\put(40,30){\line(0,-1){20}}
\put(40,30){\circle*{2}}
\put(40,10){\circle*{2}}
\put(41.5,7){{\scriptsize$\sigma_{q+1}$}}
\put(39,33){{\scriptsize$\tau_{q+1}$}}

\put(48,30){\circle*{0.5}}
\put(51,30){\circle*{0.5}}
\put(54,30){\circle*{0.5}}

\put(55,30){\line(1,0){40}}

\put(60,30){\line(0,-1){20}}
\put(60,30){\circle*{2}}
\put(60,10){\circle*{2}}

\put(75,30){\line(2,1){10}}
\put(75,30){\line(1,1){10}}
\put(75,30){\line(1,2){8}}
\put(75,30){\line(0,1){10}}

\put(75,30){\circle*{2}}
\put(72,23){\scriptsize{$\rho_2$}}

\put(90,30){\line(0,-1){20}}
\put(90,30){\circle*{2}}
\put(90,10){\circle*{2}}

\put(98,30){\circle*{0.5}}
\put(101,30){\circle*{0.5}}
\put(104,30){\circle*{0.5}}

\put(106,30){\line(1,0){25}}
\put(120,30){\circle*{2}}
\put(118,23){\scriptsize{$\rho_i$}}

\put(110,30){\line(0,-1){15}}
\put(110,30){\circle*{2}}
\put(110,15){\circle*{2}}
\put(111.5,12){\scriptsize{$\sigma_g$}}
\put(108,33){\scriptsize{$\tau_g$}}

\put(120,30){\line(2,1){10}}
\put(120,30){\line(1,1){10}}
\put(120,30){\line(0,1){10}}

\put(134,30){\circle*{0.5}}
\put(137,30){\circle*{0.5}}
\put(140,30){\circle*{0.5}}

\put(142,30){\line(1,0){8}}

\put(150,30){\vector(1,-1){10}}
\put(150,30){\circle*{2}}
\put(162,25){{\scriptsize $C$}}
\put(132,23){\scriptsize{$\rho_s = \delta_C$}}

\put(150,30){\line(2,1){10}}
\put(150,30){\line(1,1){10}}
\put(150,30){\line(1,2){8}}
\put(150,30){\line(0,1){10}}

\end{picture}
$$
\caption{The graph $\check \Gamma$ in the finite case.}
\label{fig2}
\end{figure}
%%%%
%%%%
\begin{figure}[h]
$$
\unitlength=0.50mm
\begin{picture}(120.00,40.00)(0,10)
%%%%%%%%%%%%%%%%%%%%%%%%%

\thinlines
\put(-30,30){\circle*{2}}
\put(-40,24){{\scriptsize ${\bf 1}=\sigma_0$}}

\put(-30,30){\line(1,0){20}}
\put(-8,30){\circle*{0.5}}
\put(-5,30){\circle*{0.5}}
\put(-2,30){\circle*{0.5}}
\put(1,30){\circle*{0.5}}
\put(4,30){\circle*{0.5}}
\put(5,30){\line(1,0){47}}

\put(-15,30){\line(0,-1){15}}
\put(-15,30){\circle*{2}}
\put(-15,15){\circle*{2}}
\put(-16.5,10){{\scriptsize$\sigma_1$}}
\put(-16,33){{\scriptsize$\tau_1$}}

\put(10,30){\line(0,-1){20}}
\put(10,30){\circle*{2}}
\put(10,10){\circle*{2}}
\put(11.5,7){{\scriptsize$\sigma_q$}}
\put(9,33){{\scriptsize$\tau_q$}}

is

\put(30,30){\circle*{2}}
\put(27,23){\scriptsize{$\rho_1$}}

\put(30,30){\line(1,1){10}}
\put(30,30){\line(1,2){8}}
\put(30,30){\line(0,1){10}}

\put(40,30){\line(0,-1){20}}
\put(40,30){\circle*{2}}
\put(40,10){\circle*{2}}
\put(41.5,7){{\scriptsize$\sigma_{q+1}$}}
\put(39,33){{\scriptsize$\tau_{q+1}$}}

\put(48,30){\circle*{0.5}}
\put(51,30){\circle*{0.5}}
\put(54,30){\circle*{0.5}}

\put(55,30){\line(1,0){43}}

\put(60,30){\line(0,-1){20}}
\put(60,30){\circle*{2}}
\put(60,10){\circle*{2}}

\put(75,30){\circle*{2}}
\put(72,23){\scriptsize{$\rho_j$}}

\put(75,30){\line(2,1){10}}
\put(75,30){\line(1,1){10}}
\put(75,30){\line(1,2){8}}
\put(75,30){\line(0,1){10}}

\put(90,30){\line(0,-1){20}}
\put(90,30){\circle*{2}}
\put(90,10){\circle*{2}}
\put(91.5,7){{\scriptsize$\sigma_{g}$}}
\put(89,33){{\scriptsize$\tau_{g}$}}

\put(98,30){\circle*{0.5}}
\put(101,30){\circle*{0.5}}
\put(104,30){\circle*{0.5}}
\put(107,30){\circle*{0.5}}

\put(112,30){\line(1,0){20}}
\put(120,30){\circle*{2}}

\put(120,30){\line(2,1){10}}
\put(120,30){\line(1,1){10}}
\put(120,30){\line(0,1){10}}

\put(134,30){\circle*{0.5}}
\put(137,30){\circle*{0.5}}
\put(140,30){\circle*{0.5}}

\put(142,30){\line(1,0){12}}
\put(150,30){\circle*{2}}
\put(147,23){\scriptsize{$\rho_k$}}

\put(150,30){\line(2,1){10}}
\put(150,30){\line(1,1){10}}
\put(150,30){\line(1,2){8}}
\put(150,30){\line(0,1){10}}

\put(156,30){\circle*{0.5}}
\put(159,30){\circle*{0.5}}
\put(162,30){\circle*{0.5}}
\put(165,30){\circle*{0.5}}

\end{picture}
$$
\caption{The graph $\check \Gamma$ in the infinite case.}
\label{fig3}
\end{figure}
The graph $\Gamma$ contains a subgraph which geometrically coincides with the dual graph of a usual
resolution of the curve $C$  (or of the process of its resolution): the lower part of $\Gamma$ on Figure \ref{fig1}. We use
``short'' notations (by greek letters) for vertices on this subgraph (and also for
the corresponding vertices of the graph $\check \Gamma$). All other vertices of $\Gamma$ are
obtained from them by the action of elements of the Galois group $G$.

\medskip

In Figure~\ref{fig1} (and thus in Figures~\ref{fig2} and~\ref{fig3} as well),
$\sigma_i$, $i=0,1,\ldots, g$, (and also their images under the action of $G$) are
the {\it dead ends\/} of the graph ($g$ is the number of the Puiseux pairs of
the curve $C$),
$\tau_i$,
$i=1,\ldots,g$, (and also their images) are the {\it rupture\/} points of it.

\medskip

Let $G_\sigma$ be the
isotropy subgroup of the component $E_\sigma$, i.\,e., the isotropy subgroup of the
vertex $\sigma$ in $\Gamma$. The subgroup $G_\sigma$ has a finite index in the
Galois group $G$. Let $\K_\sigma\subset \C$ be the invariant subfield of the group
$G_\sigma$. Let
$\rho_j$, $j=1,\ldots,s$, ($s$ is equal to $\infty$ in the infinite case) be the {\it
splitting points\/} corresponding to the changes of the isotropy subgroups
for the $G$-action on $\Gamma$, that means that $G_{\rho_j}\neq G_{\sigma}$ for all
$\sigma > \rho_j$ (note that $G_{\sigma}\subset G_{\rho_j}$).
(Note that they are the splitting points for usual resolution of
different branches from the $G$-orbit of $C$.) In Figures~\ref{fig2} and \ref{fig3}
the small lines at each
$\rho_j$ indicate that the graph after $\rho_j$ is replied at each small
line.

%One can see that, for $g\in G$, $G_{g \sigma}=g G_{\sigma}g^{-1}$,
%$\K_{g \sigma}= g \K_{\sigma}$.

For all the vertices $\sigma\in \Gamma$ in between
the initial one $\sigma_0 =$\;{\bf 1} and $\rho_1$ ($\rho_1$ included) the subgroups
$G_{\sigma}$ and the subfields $\K_\sigma$ are the same:
$G_{\sigma}=G=G_0$, $\K_{\sigma}=\K=\K_0$. For all vertices
$\sigma\in \Gamma$ in between  the vertex $\rho_j$ and $\rho_{j+1}$
($\rho_j$ excluded, $\rho_{j+1}$ included)
the isotropy subgroups $G_{\sigma}$ and the corresponding subfields $\K_{\sigma}$ are the same:
$G_\sigma= G_j$, $\K_\sigma=\K_j$.
In the finite case, for all the vertices $\sigma\in \Gamma$ between $\rho_s$ and $\delta_C$ one has
$G_{\sigma}=G_s$, $\K_\sigma=\K_s$. For vertices from $G$-orbits of the discussed
ones, the corresponding subgroups are conjugate to $G_{\sigma}$ and the subfields
are obtained from $\K_\sigma$ by the shift by an element of the Galois group $G$. One has
$G=G_0\supset G_1\supset \ldots \supset G_s$,
$\K=\K_0\subset \K_1\subset \ldots \subset \K_s$, $[K_j: K_{j-1}]=[G_{j-1}:G_j]$.
Let $\ell_j:= [\K_j: \K_{j-1}]$.

\medskip

%\subsection*{Special curvettes.}
A {\em curvette} corresponding to the
component $E_\sigma$ is the blow-down $C_\sigma$ of a complex-analytic smooth curve
$\gamma_\sigma$ transversal to the
component $E_\sigma$ at a smooth point of the exceptional divisor, that is not an
intersection point with other components. Let $f_\sigma=0$,
$f_\sigma\in \OO_{\C^2,0}$ be an equation of the curve $C_\sigma$.
For a valuation $\nu$ under consideration
and for $\sigma\in \Gamma$, let $m_\sigma = \nu(f_\sigma)$.
One can see that, for the curve valuation $\nu_C$, it is equal to the intersection
number $C\circ C_\sigma$. 
%for
%$\delta_C$ such that, on the surface of the resolution, the
%strict transform of the branch $C$ intersects the component $E_{\delta_C}$.
%In the infinite case, $m_\sigma$ is equal to the intersection number
%$C_{\sigma}\circ C_{\sigma'}$
%for $\sigma'$ on the subgraph of the usual resolution of the curve $C$ far enough from the
%initial component of the modification (in fact, for $\sigma'$ after the vertex $\tau_g$).
(The integers $m_\sigma$ can be interpreted also in terms of the intersection
matrix of the components $E_\sigma$, see, e.\; g., \cite{Extensions}.)

%
%For computations, we shall use coordinate systems compatible with the actions of the
%Galois groups. This means the following.
%There are two standard charts covering the initial component of the resolution
%process: with the coordinates $u=x, w=y/x$ or $u=y, w=x/y$ respectively.
%If, on a previously created component one makes the blow-up at the point
%$(u,w)=(0,a)$ (in a standard chart), one has two standard charts covering the
%new-born component: with the coordinates $u'=u, w'=(w-a)/u$ and
%$u'=w-a, w'=u/(w-a)$ respectively.
%
%On all the components $E_\sigma$ of the exceptional divisor except $E_{\rho_j}$ and
%those from its G-orbit, on each step the blow-up is made at one point
%with the coordinates $(0, a)$ in a standard chart with
%$a\in \K_{\sigma}$.
%In this case it can be convenient to change the coordinate $w$
%by $\widetilde{w}=w/(w-a)$ or
%$\widetilde{w}=(w-a)/w$ so that the coordinates of the intersection points with the
%components on the geodesic between $\sigma_0$ and
%$\delta_C$ become equal to $0$ or to $\infty$ respectively.
%On the component $E_{\rho_j}$,
%the blow-ups are made at a certain point $(0, a)$ with
%$a\in \K_j\setminus \K_{j-1}$ (that is not from
%$\K_{\rho_j}=\K_{j-1}$) and at the  points from its
%$G_{j-1}$-orbit. In this case a change
%as above is inappropriate.

In what follows, we shall use curvettes of special type.
For a component $E_\sigma$,
one can show that the
(smooth and transversal) curve $\gamma_\sigma$ can choosen to be defined over the
field $\K_\sigma$. In other
words, the curve $\gamma_\sigma$ is invariant under the action of $G_\sigma$ (see
\cite{Extensions} for details).

%In particular one can take the curve $\{u=\tau$, $w=c=\text{const}\}$ in a standard
%chart with
%$c\in\K_\sigma$ ($u=0$ is the equation of $E_\sigma$ in the chart).
%{\bf\textcolor{red}{It seems that the paragraph has to be rwritten.}}

\begin{definition}
A $\K_\sigma$-curvette at the component $E_\sigma$ is the blow-down $C_\sigma$ of a
curve $\gamma_\sigma$ described above.
\end{definition}

One can see that $C_\sigma$ has a parametrization like (\ref{eqn:curve}) with all the
coefficients from $\K_\sigma$.
A $\K_\sigma$-curvette at the component $E_\sigma$ can be
defined by an equation $f_\sigma=0$ with $f_\sigma\in \EE_{\K_\sigma^2,0}$.

\begin{definition}
A $G_\sigma$-curvette at the component $E_\sigma$ is the union of the curves
$g C_\sigma$ ($C_\sigma$ is a $\K_\sigma$-curvette at $E_\sigma$) over
representatives of the $G_\sigma$-classes in $G$.
\end{definition}

One can show that a $G_\sigma$-curvette at the component $E_\sigma$ can be defined by
an equation $F_\sigma=0$ with
$F_\sigma\in \EE_{\K^2,0}$. In fact one can take
$F_\sigma=\prod_{[g]\in G/G_\sigma} (gf_\sigma)$, where the product is over
representatives of the $G_\sigma$-classes in $G$.

For $\sigma\in \check{\Gamma}$, let $M_\sigma$ be the value of
$\nu$ on the left-hand side
of the equation $F_\sigma=0$ defining a $G_\sigma$-curvette at the component
$E_\sigma$.
One can see that $M_\sigma=\sum_{[g]\in G/G_{\sigma}} m_{g \sigma}$.

\medskip

%\subsection*{Divisorial valuations.}
Another type of valuations is the divisorial one. Let
$\pi: (W,D)\to (\C^2,0)$ be a $G$-invariant modification of the complex
plane and let $E_{\delta}$ be a component of the surface $W$
of the modification.
%(It is isomorphic to the projective line.)
For a function germ $f\in\OO_{\C^2,0}$, let $\nu_{\delta}(f)$
be the multiplicity of the lifting $\widetilde{f}=f\circ\pi$
of the function $f$ to the surface $W$ of the modification along the
divisor $E_{\delta}$. The map $\nu_{\delta}$ is a valuation on the algebra
$\OO_{\C^2,0}$
(and thus on the algebra $\EE_{\K^2,0}\subset \OO_{\C^2,0}$)
called {\em divisorial}.
The modification $\pi: (W,D)\to (\C^2,0)$ is a resolution of the
valuation $\nu_{\delta}$ on $\EE_{\K^2,0}$. The (dual) graph (or rather its
quotient by the Galois group $G$)
of the minimal resolution of the valuation $\nu_{\delta}$ 
(Figure~\ref{fig4}) looks
like the one of the minimal resolution of a curve valuation
in a finite case (Figure~\ref{fig2}) with the only difference
that the ``last'' vertex is not a splitting one.
In fact it is a resolution of a curvette at the component $E_{\delta}$.

\medskip

Now we can formulate the main results of~\cite{Extensions}.

\begin{theorem}\label{theo:from_Extensions}
For a plane curve valuation $\nu_C$, one has
 \begin{equation}
  P^{\K}_C(t)=\frac{
\prod_{i=1}^g(1-t^{M_{\tau_i}})}{\prod_{i=0}^g(1-t^{M_{\sigma_i}})}\cdot
\prod_{j=1}^s \frac{1-t^{\ell_j M_{\rho_j}}}
{1-t^{M_{\rho_j}}}\,.
 \end{equation}
For a divisorial valuation $\nu_\delta$, one has
 \begin{equation}
  P_{\nu_{\delta}}(t)=
  \frac{1}{1-t^{M_\delta}} \cdot
  \frac{
\prod_{i=1}^g(1-t^{M_{\tau_i}})}{\prod_{i=0}^g(1-t^{M_{\sigma_i}})}\cdot
\prod_{j=1}^s \frac{1-t^{\ell_j M_{\rho_j}}}
{1-t^{M_{\rho_j}}}\,.
 \end{equation}
\end{theorem}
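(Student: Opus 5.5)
Since $\nu_C$ on $\EE_{\K^2,0}$ is a single valuation, Equation~(\ref{eqn:Poincare_for_one}) gives $P^{\K}_C(t)=\sum_v \dim_{\K}(J^{\K}(v)/J^{\K}(v+1))\,t^v$. The plan is to compute this series from the $\C$-side by Galois descent. First extend scalars: $J^{\K}(v)\otimes_{\K}\C$ is the subspace of $\OO_{\C^2,0}$ (modulo high-order jets, so everything is finite-dimensional) spanned by $\K$-functions with $\nu_C\ge v$. A function $f$ is defined over $\K$ and satisfies $\nu_C(f)\ge v$ if and only if $\nu_{gC}(f)\ge v$ for all $g\in G$. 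Hence
\[
J^{\K}(v)\otimes_{\K}\C=\{f:\nu_{gC}(f)\ge v\ \ \forall g\in G\},
\]
and $P^{\K}_C$ becomes the diagonal restriction of the multi-index Hilbert function of the $G$-orbit of $C$. In the finite case this orbit is a finite collection of branches $C_1=C,\dots,C_N$. In the infinite case I would truncate: for bounded $v$ only finitely many blow-ups matter, and the orbit may be replaced by a finite $G$-invariant one cut off after $\rho_k$ for $k$ large.

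Next I would use the Campillo--Delgado--Gusein-Zade description of the multivariable Poincar\'e series as an integral with respect to the Euler characteristic over $\P\OO_{\C^2,0}$. By \cite{Duke}, $P(\tt)$ equals $\int \tt^{\nunu(f)}\,d\chi$. For our purpose, however, we need $\dim_{\K}$, which corresponds to integrating over the $\K$-points. The cleanest route is to work with the space of $G$-invariant jets, i.e.\ the $\K$-points. Their classes are determined by the strict transform of $\{f=0\}$, which is an effective $G$-invariant divisor, i.e.\ a $G$-invariant finite set of points with multiplicities on the exceptional divisor $D$. The dimension count then reduces to a $\K$-analogue of the A'Campo/Euler-characteristic computation: the space of $G$-invariant curvette configurations is a product, over $\check\Gamma$, of symmetric powers of the $\K_\sigma$-rational parts of the punctured components $\mathring E_\sigma$.

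The main step is computing the contribution of each vertex of $\check\Gamma$. For a vertex $\sigma$ whose component meets the rest of $D$ in $\kappa_\sigma$ points (in the quotient), the rational locus of $\mathring E_\sigma$ behaves, in ``dimension count'', like $\P^1_{\K_\sigma}$ minus the $\K_\sigma$-rational intersection points. An orbit of $G_\sigma$-curvettes contributes $t^{M_\sigma}$ per point. Thus each vertex contributes $(1-t^{M_\sigma})^{-(2-\kappa_\sigma)}$. This gives factors $(1-t^{M_{\sigma_i}})^{-1}$ at dead ends, $(1-t^{M_{\tau_i}})^{+1}$ at rupture points, and $1$ at ordinary vertices. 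At a splitting point $\rho_j$, the further blow-up is at a point defined over $\K_j$ but not over $\K_{j-1}$, i.e.\ at a closed point of degree $\ell_j$. Removing that point changes the contribution by the factor $(1-t^{\ell_jM_{\rho_j}})/(1-t^{M_{\rho_j}})$: one removes a whole Galois orbit of $\ell_j$ points, which carries weight $t^{\ell_j M_{\rho_j}}$, instead of a single rational point. For the curve valuation, the final vertex $\delta_C$ is where $C$ itself passes, so the curvette there is excluded; for the divisorial case it stays, giving the extra $1/(1-t^{M_\delta})$.

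The main obstacle is justifying that these ``$\K$-point counts'' are legitimate, i.e.\ that $\dim_{\K}$ of the graded pieces is governed by this motivic-style decomposition, with $\P^1_{\K_\sigma}$ behaving like $1+\LLL$ at $\LLL=1$ and a degree-$\ell$ point like $t^{\ell M}$. I would first try to prove this by induction on the number of blow-ups, via the projection formula for $\nu$ along each blow-up. Failing that, I would restrict to finite-dimensional jet spaces over $\K$ and check the factorization vertex by vertex, using the $\K_\sigma$-curvettes to produce explicit bases of $J^{\K}(v)/J^{\K}(v+1)$.
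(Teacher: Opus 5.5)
Your first step is correct: since $\C^G=\K$, a $G$-stable $\C$-subspace of jets is spanned by its $\K$-rational elements. Hence $J^{\K}(v)\otimes_{\K}\C=\{f:\nu_{gC}(f)\ge v\ \forall g\in G\}$, and $a_v=\dim_{\K}J^{\K}(v)/J^{\K}(v+1)$ is the diagonal Hilbert function of the orbit $GC$. But you never use this. You switch instead to ``integrating over $\K$-points'', and that step has no foundation.

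The vertex-by-vertex product needs an additive invariant on $\K$-constructible sets that gives $2$ on $\P^1_{\K_\sigma}$ and $1$ on a rational point. It must also make a closed point of degree $\ell$ contribute the single factor $1-t^{\ell M}$. These weights mix two incompatible bookkeepings, and no such invariant exists:
\begin{itemize}
\item $\K$-points (e.g.\ for $\K=\mathbb{Q}$) carry no Euler characteristic.
\item For $\K=\R$ one has $\chi_c(\R\P^1)=0$.
\item The Euler characteristic of complex points gives a degree-$\ell_j$ point weight $\ell_j$, hence $(1-t^{M_{\rho_j}})^{\ell_j}$ instead of $1-t^{\ell_jM_{\rho_j}}$.
\item Counting closed points by degree diverges on $\P^1$ over an infinite field.
\end{itemize}
More fundamentally, $a_v$ is not additive over strata at all. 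For a curve valuation $J^{\C}(v)/J^{\C}(v+1)\cong\C$, so the images of the strata in $J^{\K}(v)/J^{\K}(v+1)$ overlap. In fact $a_v$ is the maximum $\max_{\{k_\sigma\}}d^{\{k_\sigma\}}$ of the $\K$-dimensions of the spans of leading coefficients, not a sum of measures. So the step you call ``the main obstacle'' is the whole content of the theorem. Your weights are chosen to match the formula rather than derived.

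What is missing is the analysis of how these $\K$-spans grow at the splitting points. This is what \cite{Extensions} does and what the proof of Theorem~\ref{theo:for_curve} adapts. Let $P^{(j)}$ be built from collections $\{k_\sigma\}$ vanishing on the geodesic $[\rho_j,\delta_C]$. One shows:
\begin{itemize}
\item $P^{(1)}$ is the A'Campo-type product over dead ends and rupture points, with all nonzero coefficients equal to $1$.
\item $a^{(j)}_v\le[\K_{j-1}:\K_0]$.
\item For $v=\ell M_{\rho_j}+b$ with $b\in S^{\K}_C$, $b-M_{\rho_j}\notin S^{\K}_C$, $\ell\le\ell_j-1$, one has $a^{(j+1)}_v=\ell[\K_{j-1}:\K_0]+a^{(j)}_b$. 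Values of the form $M_\varkappa+b'$ with $\varkappa>\rho_j$ are handled separately.
\end{itemize}
Together these give $P^{(j+1)}=P^{(j)}\cdot(1+t^{M_{\rho_j}}+\dots+t^{(\ell_j-1)M_{\rho_j}})$. The divisorial case compares collections with $k_\delta=0$ with those of a $\K_s$-curvette at $E_\delta$, and proves $a_v=\ell[\K_s:\K_0]+a'_b$ for $v=\ell M_\delta+b$.

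If you prefer to continue from your descent step, you would have to compute $\int_{\P\OO}t^{\min_g\nu_{gC}}\,d\chi$ over complex functions, stratified over the full graph $\Gamma$. But $\min_g$ is not additive in $\{k_\sigma\}$, so this integral does not factor over vertices. Also, the Galois action on complex points is discontinuous, so you cannot localize to $G$-invariant strata. Either way, an argument of the kind above is still needed.
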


%%%%%%%%%%%%%%%%%%%%%%%%%%%%%%%%
\section{Generalized Poincar\'e series of a plane curve valuation}
\label{sec:general_curve} %% 6
%%%%%%%%%%%%%%%%%%%%%%%%%%%%%%%%
Here we give an equation for the generalized Poincar\'e series
of a curve valuation on $\EE_{\K^2,0}$.

\begin{theorem}\label{theo:for_curve}
For a plane curve valuation $\nu_C$ on $\EE_{\K^2,0}$, one has
 \begin{equation}
  P(t;\LLL)=\frac{
\prod_{i=1}^g(1-t^{M_{\tau_i}})}{\prod_{i=0}^g(1-t^{M_{\sigma_i}})}\cdot
\prod_{j=1}^s \frac{1-\LLL^{\ell_j[K_{j-1}:K_0]}t^{\ell_j M_{\rho_j}}}
{1-\LLL^{[K_{j-1}:K_0]}t^{M_{\rho_j}}}\,.
 \end{equation}
 \end{theorem}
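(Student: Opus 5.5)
The plan is to reduce the statement to a combinatorial identity for the coefficients, using the formula (\ref{eqn:K_Poincare_for_one}) for one valuation. Write $a_v=\dim_{\K}J^{\K}(v)/J^{\K}(v+1)$. Then $P(t;\LLL)=\sum_v(1+\LLL+\ldots+\LLL^{a_v-1})t^v$, so it suffices to prove two things. First, that the right-hand side of Theorem~\ref{theo:for_curve}, expanded as a power series, has as coefficient of $t^v$ exactly $1+\LLL+\ldots+\LLL^{a_v-1}$. Second, that setting $\LLL=1$ recovers Theorem~\ref{theo:from_Extensions}, which serves as a consistency check. For this I use the identity
$$
\frac{1-\LLL^{\ell_j d_j}t^{\ell_j M_{\rho_j}}}{1-\LLL^{d_j}t^{M_{\rho_j}}}=\sum_{k=0}^{\ell_j-1}\LLL^{k d_j}t^{kM_{\rho_j}},\qquad d_j:=[\K_{j-1}:\K_0].
$$
The factor $\prod_{i=1}^g(1-t^{M_{\tau_i}})/\prod_{i=0}^g(1-t^{M_{\sigma_i}})$ is the usual Poincar\'e series of the semigroup generated by $M_{\sigma_0},\ldots,M_{\sigma_g}$. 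Since $M_{\tau_i}=n_iM_{\sigma_i}$ in the standard way, it expands as $\sum t^{w}$, with each $w$ having a unique normal-form representation.

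The first real step is to describe the graded pieces $J^{\K}(v)/J^{\K}(v+1)$ explicitly, following the proof of Theorem~\ref{theo:from_Extensions} in \cite{Extensions}. The map $f\mapsto a(f)$ (the leading coefficient in (\ref{eqn:curve_valuation})) embeds $J^{\K}(v)/J^{\K}(v+1)$ into $\C$ as a $\K$-subspace $A_v$, and products of elements correspond to products of leading coefficients. The elements $F_{\sigma_i}$ and $F_{\rho_j}$ are defined over $\K$ and have values $M_{\sigma_i}$ and $M_{\rho_j}$. I would show that every value $v$ of $\nu_C$ on $\EE_{\K^2,0}$ has the form $v=w+\sum_j k_jM_{\rho_j}$ with $w$ in the semigroup generated by the $M_{\sigma_i}$ and $0\le k_j<\ell_j$. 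I would also show that the representations of $v$ of this kind are in bijection with a $\K$-basis of $A_v$: the monomials $\prod F_{\sigma_i}^{n_i}\prod_j F_{\rho_j}^{k_j}$, twisted by $\K_{j-1}$-multiples coming from the curvette functions $f_{\rho_j}$ defined over $\K_{\rho_j}=\K_{j-1}$. The natural guess is that $A_v$ is a vector space of dimension one over a field $\K_m$, where $m$ is the largest index of a $\rho_j$ effectively involved. In that case $a_v=[\K_m:\K_0]=\ell_1\cdots\ell_m$.

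The second step is the combinatorial matching. The key observation is that $d_j=\ell_1\cdots\ell_{j-1}$. As the digits $k_1,\ldots,k_m$ run over $0\le k_j<\ell_j$, the exponent $\sum_j k_jd_j$ is a mixed-radix expansion, so it runs exactly once over $0,1,\ldots,\ell_1\cdots\ell_m-1$. So if the representations of a given $v$ are parametrised by the digits $(k_1,\ldots,k_m)$ running over the full box $\prod_j[0,\ell_j)$, the coefficient of $t^v$ in the product is $\sum_{e=0}^{a_v-1}\LLL^e=(\LLL^{a_v}-1)/(\LLL-1)$, as required.

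The main obstacle is exactly this parametrisation. The $M_{\rho_j}$ are related to the $M_{\sigma_i}$ by relations: $\ell_jM_{\rho_j}$ lies in the semigroup generated by the other elements. Consequently, a given $v$ generally arises from several tuples $(w,k_1,\ldots,k_s)$, and one must check that these tuples fill a full box of digits. I would first try to prove this by induction on $j$, i.e.\ along the splitting points. Passing from $\K_{j-1}$ to $\K_j$, the space $A_v$ for the relevant values gets multiplied by the $\ell_j$ linearly independent elements $1,\alpha,\ldots,\alpha^{\ell_j-1}$, where $\alpha$ is the leading coefficient of $F_{\rho_j}$ divided by that of $F_{\rho_j}^{\ell_j}$ expressed through lower generators. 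This independence multiplies dimensions by $\ell_j$, consistent with the $\LLL=1$ specialization in Theorem~\ref{theo:from_Extensions}. Since the usual series is already known, it suffices to verify that the digit sets are full boxes rather than arbitrary multisets of the same size.
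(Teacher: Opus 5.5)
\textbf{There is a genuine gap.} Your structural guess is false, and so is the reduction built on it. It is not true that $J^{\K}(v)/J^{\K}(v+1)$ is a line over some field $\K_m$ of the tower. Nor is $a_v=\ell_1\cdots\ell_m$, nor are the digit sets full boxes.

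Here is a counterexample. Take $\K=\mathbb{Q}$ and $C\colon x=\tau$, $y=\alpha\tau^2$ with $\alpha^3=2$.
\begin{itemize}
\item Then $g=0$, $s=1$, $\rho_1=\delta_C$ is the second exceptional component, $\ell_1=3$, $M_{\sigma_0}=\nu(x)=1$ and $M_{\rho_1}=\nu(y)=2$.
\item The theorem gives $P(t;\LLL)=(1+\LLL t^2+\LLL^2t^4)/(1-t)$.
\item For $f=\sum a_{ij}x^iy^j\in J^{\K}(2)$ one has $a_{00}=a_{10}=0$, and the leading coefficient is $a_{20}+a_{01}\alpha$.
\item Hence $J^{\K}(2)/J^{\K}(3)\cong\mathbb{Q}+\mathbb{Q}\alpha$. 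Its dimension is $2\notin\{1,3\}$, and it is not a vector space over any field of the tower.
\item Correspondingly, the digits occurring for $v=2$ are $k_1\in\{0,1\}$, not the full box $\{0,1,2\}$. The coefficient $1+\LLL$ is correct only because this digit set is an \emph{initial segment}. The same happens at $v=3$.
\end{itemize}
So ``it suffices to verify that the digit sets are full boxes'' asks you to prove something false. For the same reason, passing from $\K_{j-1}$ to $\K_j$ does not multiply dimensions by $\ell_j$. Lemma~\ref{lemma:st-2_for_Theo_curve} gives $a^{(j+1)}_v=\ell[\K_{j-1}:\K_0]+a^{(j)}_b$ for $v=\ell M_{\rho_j}+b$, which is typically not a product of the $\ell_i$.

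Your coefficient-comparison framework is salvageable, but with a different target. Let $S_0$ be the support of the first factor. Let $D_v$ be the set of digit tuples $(k_j)$, $0\le k_j<\ell_j$, with $v-\sum_jk_jM_{\rho_j}\in S_0$. Theorem~\ref{theo:from_Extensions} gives $|D_v|=a_v$, and the exponents $\sum_jk_j[\K_{j-1}:\K_0]$ are distinct (mixed radix). So what you must show is that $D_v$ is downward closed for the mixed-radix order in which the highest-index digit is most significant.

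A direct computation shows that this holds provided $M_{\rho_j}-\sum_{i<j}(\ell_i-1)M_{\rho_i}\in S_0$ for every $j$. This is a saturation property: values of the form $M_{\rho_j}+b$ already attain the maximal dimension $[\K_{j-1}:\K_0]$ at level $j$. It is exactly the geometric input your proposal lacks. The paper takes it from the earlier work as Lemma~\ref{lemma:st-3_for_Theo_curve}, namely $M_{\rho_{j+1}}=(\ell_j-1)M_{\rho_j}+b$ with $a^{(j)}_b=[\K_{j-1}:\K_0]$, together with Lemma~\ref{lemma:st-2_for_Theo_curve}. It then assembles these into the inductive factorization of Lemma~\ref{lemma:st-4_for_Theo_curve}. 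Without this input, or an independent proof of the semigroup relation above, your argument does not close.
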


 \begin{proof}
  Essentially we shall modify the proof of Theorem~2
  in~\cite{Extensions} to adapt it to the new setting.
    Let $P(t)=P_{\nu_C}(t)=\sum_{v=0}^{\infty} a_vt^v$,
    $P(t;\LLL)=\sum_{v=0}^{\infty} a_v(\LLL)t^v$ (i.\,e., $a_v=a_v(1)$,
    $a_v(\LLL)=1+\LLL+\LLL^2+\ldots+\LLL^{a_v-1}$).
    For a collection
 $\{k_{\sigma}\}$,
 $\sigma\in\check{\Gamma}$, $k_{\sigma}\in\Z_{\ge0}$, let
 $\calE_{\K^2,0}^{\{k_{\sigma}\}}$ be the set of germs $f\in\calE_{\K^2,0}$
 such that the intersection multiplicity of the strict transform of the zero-level
set
 $\{f=0\}$ with the component $E_{\sigma}$ of the exceptional divisor $D=\pi^{-1}(0)$ is equal
 to $k_{\sigma}$ and, moreover, this strict transform intersects $D$ only at
 smooth points of the total transform $\pi^{-1}(C)$ of the curve $C$.
 Let $\nu(\{k_{\sigma}\}):=\sum_{\sigma\in\check{\Gamma}}k_{\sigma}M_{\sigma}$. One
can see that $\nu(\{k_{\sigma}\})=\nu(f)$
 for any $f\in\calE_{\K^2,0}^{\{k_{\sigma}\}}$.

 Without loss of generality, we assume that, for a fixed $V\in\Z$,
 for all $f\in J^{\K}(v)$ with $v\le V$, the strict transform
 of the curve $\{f=0\}$ intersects the exceptional divisor $D$
 only at smooth points of $\pi^{-1}(C)$.
 This can be achieved by making additional
blow-ups at intersection points of the components of the total transform
 $\pi^{-1}(C)$.

 Let
 $F^{\{k_{\sigma}\}}$ be the image of $\calE_{\K^2,0}^{\{k_{\sigma}\}}$ in
 the quotient $J^{\C}(v)/J^{\C}(v+1)\cong\C$ with
 $v=\nu(\{k_{\sigma}\})$ and let $\overline{F}^{\{k_{\sigma}\}}$
 be the linear span of $F^{\{k_{\sigma}\}}$ over $\K$.

 One has:
 \begin{enumerate}
  \item[1)] ${\displaystyle \bigcup_{\{k_{\sigma}\}: \nu(\{k_{\sigma}\})=v}
  F^{\{k_{\sigma}\}}=
  J^{\K}(v)/J^{\K}(v+1)}$;
  \item[2)] for each collection $\{k_{\sigma}\}$, $F^{\{k_{\sigma}\}}$
  is the complement to an arrangement of vector subspaces 
  in the vector $\K$-space $\overline{F}^{\{k_{\sigma}\}}$.
 \end{enumerate}

Let $d^{\{k_{\sigma}\}}$ be the dimension (over $\K$) of the vector space
$\overline{F}^{\{k_{\sigma}\}}$. One obviously has
$a_v=0$ if there are no $\{k_{\sigma}\}$ with $\nu(\{k_{\sigma}\})=v$. 
If $a_v>0$ then 
$$
a_v=\max_{\{k_{\sigma}\}:\nu(\{k_{\sigma}\})=v}d^{\{k_{\sigma}\}}
{\text{\ \ \ and}}\, \quad
%$$
%$$
a_v(\LLL)=\frac{1-\LLL^{a^v}}{1-\LLL}=1+\LLL+\ldots+\LLL^{a^v-1}\; .
$$

For $j=1,2, \ldots, s+1$ ($s$ may be equal to $+\infty$), let $A_j$ be
the set of collections $\{k_{\sigma}\}$ such that $k_{\sigma}=0$
for all $\sigma$
on the geodesic $[\rho_j,\delta_C]$ from $\rho_j$ to $\delta_C$ (including the ends).
For $j=s+1$ (if $s<+\infty$), we assume this geodesic to be empty.
Let the series
$P^{(j)}(t;\LLL)=\sum_{v=0}^{\infty} a^{(j)}_v(\LLL)\cdot t^v$ be defined by $a^{(j)}_v(\LLL)=0$ if there are no collections
$\{k_{\sigma}\}\in A_j$ with $\nu(\{k_{\sigma}\})=v$ and 
$$
a^{(j)}_v(\LLL)=1+\LLL+\ldots+\LLL^{a^{(j)}_v-1}\text{\ \ with\ \ }
a^{(j)}_v=\max_{\{k_{\sigma}\}\in A_j:
\nu(\{k_{\sigma}\})=v}d^{\{k_{\sigma}\}}\text{\ \ otherwise}\,.
$$
One has $P(t;\LLL)=P^{(s+1)}(t;\LLL)$ if $s<+\infty$ and, if $s=+\infty$
(i.\,e., in the infinite case),
$P(t;\LLL)$ is the limit of $P^{(j)}(t;\LLL)$
for $j\to+\infty$ in the ``$\mathfrak{m}$-adic topology'' ($\mathfrak{m}$
is the maximal ideal in $Z[[t]]$).
This means that, for each $v$, $a_v(\LLL)=a_v^{(j)}(\LLL)$ for $j$
large enough.

Assume first that $\rho_1\ne\sigma_0$.
%%  The necessary changes in the case $\rho_1=\sigma_0$ will be discussed later.

 \begin{lemma}\label{lemma:st-1_for_Theo_curve}
 One has
 \begin{equation}\label{eqn:lemma1}
 P^{(1)}(t;\LLL)=\frac{\prod_{i=1}^g (1-t^{M_{\tau_i}})}
{\prod_{i=0}^g (1-t^{M_{\sigma_i}})}\;.
 \end{equation}
 \end{lemma}

 The statement is a direct consequence of Lemmas~2 and~3
 and Proposition~3
 of~\cite{Extensions} where it is proved that the (usual,
 not generalized) series $P^{(1)}(t)$ is equal to the right
 hand side of Equation~(\ref{eqn:lemma1}) and that all its
 non-zero coefficients are equal to $1$.

 \begin{lemma}\label{lemma:st-2_for_Theo_curve}
  For each $j\ge 1$ and any $v$, $a^{(j)}_v\le [\K_{j-1}:\K_0]$.
  Assume that $v$ is of the form
  $v=\ell M_{\rho_j}+b$ with $b\in S^{\K}_C$ such that
  $b-M_{\rho_j}\notin S_C^{\K}$, that $\ell\le \ell_i-1$,
  and also that $v$ is not of the form $M_{\varkappa}+b'$ with
  $\varkappa>\rho_j$ and $b'\in S^{\K}_C$.
  Then
  $$
  a_{v}^{(j+1)}(\LLL)=
  \frac{\LLL^{\ell[\K_{j-1}:\K_0]}-1}{\LLL-1}
  +\LLL^{\ell[\K_{j-1}:\K_0]}a_{b}^{(j)}(\LLL).
  $$
 \end{lemma}

 This is just the content of Lemmas~4 and~5 of~\cite{Extensions}.

  \begin{lemma}\label{lemma:st-3_for_Theo_curve}
  Let $\varkappa$ be
  on the geodesic between $\rho_j$ and $\delta_C$, $\rho_j$ excluded, i.\,e.\ $\rho_j<\varkappa\le \delta_C$.
  Then
  \begin{enumerate}
    \item[1)] ${\displaystyle a_{M_{\varkappa}}^{(j+1)}(\LLL)=\frac{\LLL^{[\K_{j}:\K_0]}-1}{\LLL-1}}$; 
    \item[2)] $M_{\varkappa}$ is of the form
  $M_{\varkappa}=(\ell_j-1)M_{\rho_j}+b$
  with $b\in S^{\K}_C$ and
  $$a_{b}^{(j)}(\LLL)=\frac{\LLL^{[\K_{j-1}:\K_0]}-1}{\LLL-1}\; .$$
  \end{enumerate}
 % 1)  $a_{M_{\varkappa}}^{(j+1)}(\LLL)=\frac{\LLL^{[\K_{j}:\K_0]}-1}{\LLL-1}$;
 % 2) $M_{\varkappa}$ is of the form
 % $M_{\varkappa}=(\ell_j-1)M_{\rho_j}+b$
 % with $b\in S^{\K}_C$ and
 % $a_{b}^{(j)}(\LLL)=\frac{\LLL^{[\K_{j-1}:\K_0]}-1}{\LLL-1}$.
 \end{lemma}

This is just the content of Lemmas~6 and~7 of~\cite{Extensions}. In particular,
this is so for $\varkappa=\rho_{j+1}$.

  \begin{lemma}\label{lemma:st-4_for_Theo_curve}
  One has
  \begin{eqnarray}
  &\ &P^{(j+1)}(t, \LLL)=\nonumber
  \\
  &=&P^{(j)}(t;\LLL)\cdot
  (1+\LLL^{[K_{j-1}:K_0]}t^{M_{\rho_j}}+
  \LLL^{2[K_{j-1}:K_0]}t^{2M_{\rho_j}}+\ldots+
  \LLL^{(\ell_j-1)[K_{j-1}:K_0]}t^{(\ell_j-1)M_{\rho_j}})
  =\nonumber
  \\
  &=&P^{(j)}(t;\LLL)\cdot\frac{1-\LLL^{\ell_j[K_{j-1}:K_0]}t^{\ell_j M_{\rho_j}}}{1-\LLL^{[K_{j-1}:K_0]}t^{M_{\rho_j}}}.\label{eqn:Prop_for_induction}
  \end{eqnarray}
 \end{lemma}

\begin{proof}
 Equation~(\ref{eqn:Prop_for_induction}) is equivalent to
 \begin{eqnarray}
  a_v^{(j+1)}(\LLL)&=&a_v^{(j)}(\LLL)+
  \LLL^{[K_{j-1}:K_0]}a_{v-M_{\rho_j}}^{(j)}(\LLL)+
  \LLL^{2[K_{j-1}:K_0]}a_{v-2M_{\rho_j}}^{(j)}(\LLL)+\ldots +\nonumber\\
  &{\ }&\ldots + \LLL^{(\ell_j-1)[K_{j-1}:K_0]}a_{v-(\ell_j-1)M_{\rho_j}}^{(j)}(\LLL)\,. \label{eqn:Prop_for_induction_exp}
 \end{eqnarray}
 If $v\in S_C^{\K}$ cannot be represented as $M_{\rho_j}+b$ with
 $b\in S_C^{\K}$, one has $a_v^{(j+1)}(\LLL)=a_v^{(j)}(\LLL)$
 what coincides with Equation~(\ref{eqn:Prop_for_induction_exp}).
 Let $v=\ell M_{\rho_j}+b$ with $b\in S_C^{\K}$, $1\le\ell\le\ell_i-1$,
 and $b-M_{\rho_j}\notin S_C^{\K}$. In this case
 Equation~(\ref{eqn:Prop_for_induction_exp}) tends to
 \begin{eqnarray*}
  a_v^{(j+1)}(\LLL)&=&a_v^{(j)}(\LLL)+
  \LLL^{[K_{j-1}:K_0]}a_{v-M_{\rho_j}}^{(j)}(\LLL)+
  \LLL^{2[K_{j-1}:K_0]}a_{v-2M_{\rho_j}}^{(j)}(\LLL)+
  \ldots \\
  &+& \LLL^{{(\ell-1)}[K_{j-1}:K_0]}a_{v-(\ell-1)M_{\rho_j}}^{(j)}(\LLL)
  + \LLL^{{\ell}[K_{j-1}:K_0]}a_{b}^{(j)}(\LLL)\,.
 \end{eqnarray*}
 (other summands are equal to zero).
 Due to
 Lemma~\ref{lemma:st-3_for_Theo_curve} (Lemma~\ref{lemma:st-1_for_Theo_curve} for $j=1$) one has
 $$
 a_{M_{\rho_j}}^{(j)}(\LLL)=\frac{\LLL^{[\K_{j-1}:\K_0]}-1}{\LLL-1}
 =1+\LLL+\LLL^2+\ldots+\LLL^{[\K_{j-1}:\K_0]-1}\,.
 $$
 For $0\le\ell'\le\ell-1$,
 $v-\ell'M_{\rho_j}=M_{\rho_j}+b^*$ with
 $b^*\in S^{\K}_C$. Therefore all the factors
 $a_{v-(\ell'-1)M_{\rho_j}}^{(j)}(\LLL)$ ($0\le \ell'<\ell$)
 are equal to $a_{M_{\rho_j}}^{(j)}(\LLL)$.
 Thus in this case Equation~(\ref{eqn:Prop_for_induction}) follows
 from Lemma~\ref{lemma:st-2_for_Theo_curve}.

 Now let $v=(\ell_j-1) M_{\rho_j}+b$ with $b\in S^{\K}_C$.
 There are two possibilities.
 \begin{enumerate}
  \item[1)] The value $v$ can be represented as
  $M_{\varkappa}+b'$ with $b'\in S^{\K}_C$,
  $\varkappa>\rho_j$. In this case
  (\ref{eqn:Prop_for_induction}) follows from Lemma~\ref{lemma:st-2_for_Theo_curve}
  and~\ref{lemma:st-3_for_Theo_curve}
  \item[2)] The value $v$ cannot be represented in the indicated form. In this case
 Equation~(\ref{eqn:Prop_for_induction}) follows from
 Lemma~\ref{lemma:st-2_for_Theo_curve} again.
 \end{enumerate}
 \end{proof}

 Lemmas~\ref{lemma:st-1_for_Theo_curve} and~\ref{lemma:st-4_for_Theo_curve} imply the statement
 of Theorem~\ref{theo:for_curve}
 in the case under consideration: $\rho_1\ne\sigma_0$.
 In the case
 $\rho_1=\sigma_0$,
 Lemma~\ref{lemma:st-4_for_Theo_curve}
 holds for $j\ge 2$.
 Lemma~\ref{lemma:st-1_for_Theo_curve}
 does not hold. However, it is possible to compute
 the series $P^{(2)}(t)$ directly
 using Lemmas~\ref{lemma:st-2_for_Theo_curve} and \ref{lemma:st-3_for_Theo_curve}.
 Thus the statement of Theorem~\ref{theo:for_curve}
 holds in this case as well.
 \end{proof}

 %%%%%%%%%%%%%%%%%%%%%%%%%%%%%%%%
\section{Generalized Poincar\'e series of a plane divisorial valuation}
\label{sec:general_divisor} %% 6
%%%%%%%%%%%%%%%%%%%%%%%%%%%%%%%%
Let $\nu_{\delta}$ be the divisorial valuation on $\EE_{\K^2,0}$
defined by a component $E_{\delta}$ of a (finite) $G$-invariant modification of the plane $(\C^2,0)$. The quotient $\check{\Gamma}$ of its dual
graph of the minimal $G$-invariant resolution looks as on Figure~\ref{fig4}.
Pay atention that $\delta\ne\rho_s$.

\begin{figure}[h]
$$
\unitlength=0.50mm
\begin{picture}(120.00,40.00)(0,10)
%%%%%%%%%%%%%%%%%%%%%%%%%

\thinlines
\put(-30,30){\circle*{2}}
\put(-40,24){{\scriptsize ${\bf 1}=\sigma_0$}}

\put(-30,30){\line(1,0){20}}
\put(-8,30){\circle*{0.5}}
\put(-5,30){\circle*{0.5}}
\put(-2,30){\circle*{0.5}}
\put(1,30){\circle*{0.5}}
\put(4,30){\circle*{0.5}}
\put(5,30){\line(1,0){40}}

\put(-15,30){\line(0,-1){15}}
\put(-15,30){\circle*{2}}
\put(-15,15){\circle*{2}}
\put(-16.5,10){{\scriptsize$\sigma_1$}}
\put(-16,33){{\scriptsize$\tau_1$}}

\put(10,30){\line(0,-1){20}}
\put(10,30){\circle*{2}}
\put(10,10){\circle*{2}}
\put(11.5,7){{\scriptsize$\sigma_q$}}
\put(9,33){{\scriptsize$\tau_q$}}

\put(30,30){\circle*{2}}
\put(27,23){\scriptsize{$\rho_1$}}

\put(30,30){\line(1,1){10}}
\put(30,30){\line(1,2){8}}
\put(30,30){\line(0,1){10}}

\put(40,30){\line(0,-1){20}}
\put(40,30){\circle*{2}}
\put(40,10){\circle*{2}}
\put(41.5,7){{\scriptsize$\sigma_{q+1}$}}
\put(39,33){{\scriptsize$\tau_{q+1}$}}

\put(48,30){\circle*{0.5}}
\put(51,30){\circle*{0.5}}
\put(54,30){\circle*{0.5}}

\put(55,30){\line(1,0){40}}

\put(60,30){\line(0,-1){20}}
\put(60,30){\circle*{2}}
\put(60,10){\circle*{2}}

\put(75,30){\circle*{2}}
\put(72,23){\scriptsize{$\rho_j$}}

\put(75,30){\line(2,1){10}}
\put(75,30){\line(1,1){10}}
\put(75,30){\line(1,2){8}}
\put(75,30){\line(0,1){10}}

\put(90,30){\line(0,-1){20}}
\put(90,30){\circle*{2}}
\put(90,10){\circle*{2}}
\put(98,30){\circle*{0.5}}
\put(101,30){\circle*{0.5}}
\put(104,30){\circle*{0.5}}
\put(107,30){\circle*{0.5}}

\put(112,30){\line(1,0){20}}
\put(120,30){\circle*{2}}
\put(118,23){\scriptsize{$\rho_s$}}

\put(120,30){\line(2,1){10}}
\put(120,30){\line(1,1){10}}
\put(120,30){\line(0,1){10}}

\put(134,30){\circle*{0.5}}
\put(137,30){\circle*{0.5}}
\put(140,30){\circle*{0.5}}

\put(142,30){\line(1,0){8}}

\put(150,30){\circle{4}}
\put(150,30){\circle*{2}}

\put(155,28){{\scriptsize $\delta$}}

\end{picture}
$$
\caption{The graph $\check \Gamma$ for a divisorial valuation.}
\label{fig4}
\end{figure}

 \begin{theorem}\label{theo:for_divisorial}
For a divisorial valuation $\nu_{\delta}$, one has
 \begin{equation}\label{eqn:theo_divisor}
  P(t;\LLL)=
  \frac{1}{1-\LLL^{[K_s:K_0]} \cdot t^{M_\delta}} \cdot
\frac{\prod_{i=1}^g(1-t^{M_{\tau_i}})}{\prod_{i=0}^g(1-t^{M_{\sigma_i}})}\cdot
\prod_{j=1}^s \frac{1-\LLL^{\ell_j[K_{j-1}:K_0]}t^{\ell_j M_{\rho_j}}}
{1-\LLL^{[K_{j-1}:K_0]}t^{M_{\rho_j}}}\,.
 \end{equation}
\end{theorem}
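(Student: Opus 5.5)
The plan is to reduce the divisorial case to the curve case through the filtration argument used in the proof of Theorem~\ref{theo:for_curve}, and then account for one extra step at the vertex $\delta$. We work on the minimal $G$-invariant resolution of $\nu_\delta$, blown up further at intersection points of components so that, for all values up to a fixed $V$, strict transforms of the relevant germs meet $D$ only at smooth points. As before, $\calE_{\K^2,0}^{\{k_\sigma\}}$ denotes the set of germs whose strict transform has intersection multiplicity $k_\sigma$ with $E_\sigma$. The difference from the curve case is that now $k_\delta$ may be positive.

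The fibre $J^{\K}(v)/J^{\K}(v+1)$ is again the union of the sets $F^{\{k_\sigma\}}$ with $\nu(\{k_\sigma\})=v$. Each $F^{\{k_\sigma\}}$ is the complement of a subspace arrangement in $\overline{F}^{\{k_\sigma\}}$. The coefficient $a_v$ is therefore the maximum of $d^{\{k_\sigma\}}$, and $a_v(\LLL)=(\LLL^{a_v}-1)/(\LLL-1)$.

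First, restrict to collections with $k_\delta=0$. Here the argument of Section~\ref{sec:general_curve} applies verbatim: Lemmas~\ref{lemma:st-1_for_Theo_curve}--\ref{lemma:st-4_for_Theo_curve} use only the geodesic from $\sigma_0$ through the splitting points $\rho_1,\ldots,\rho_s$, and these are the same as for a curvette at $E_\delta$. Running the induction up to $j=s+1$ gives that this partial series $P^{(s+1)}(t;\LLL)$ equals
$$
\frac{\prod_{i=1}^g(1-t^{M_{\tau_i}})}{\prod_{i=0}^g(1-t^{M_{\sigma_i}})}\cdot\prod_{j=1}^s \frac{1-\LLL^{\ell_j[K_{j-1}:K_0]}t^{\ell_j M_{\rho_j}}}{1-\LLL^{[K_{j-1}:K_0]}t^{M_{\rho_j}}}.
$$

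Second, add the points of $E_\delta$, the analogue of Lemma~\ref{lemma:st-4_for_Theo_curve} with $\ell=\infty$. Germs with a fixed $k_\delta=k$ are products of $k$ factors $F_\delta$ (which are $G_\delta$-curvettes) with a germ of the previous type. The key claim is
$$
a_v(\LLL)=a_v^{(s+1)}(\LLL)+\LLL^{[\K_s:\K_0]}a_{v-M_\delta}(\LLL),
$$
that is, $P(t;\LLL)=P^{(s+1)}(t;\LLL)/(1-\LLL^{[\K_s:\K_0]}t^{M_\delta})$. On the level of dimensions this says $a_v=[\K_s:\K_0]+a_{v-M_\delta}$ whenever $v-M_\delta\in S^{\K}$. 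The reason is that the $\K$-span contributed by the new choice on $E_\delta$ has dimension $[\K_\delta:\K]=[\K_s:\K_0]$: the $G$-orbit of a generic point of $E_\delta$ gives $[\K_s:\K_0]$ independent leading coefficients. The remaining dimensions come from the factor of value $v-M_\delta$. At level $\LLL$ this splits the projective space as a disjoint union of an affine part of class $\LLL^{[\K_s:\K_0]}\cdot(\text{class for } v-M_\delta)$ and the part with smaller support, which gives the formula.

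The main obstacle is this second step. One has to prove the analogue of Lemmas~4--7 of~\cite{Extensions} for $E_\delta$, namely that $a_v$ is attained by a collection with maximal $k_\delta$. The argument should follow the proof of the corresponding divisorial formula in Theorem~\ref{theo:from_Extensions}, where $a_v=a_v^{(s+1)}+a_{v-M_\delta}$. One must also verify that every value $v$ with $v-M_\delta\notin S^{\K}$ is handled by the $k_\delta=0$ part. Comparison with that classical statement, via the substitution $\LLL\mapsto1$, then serves as a consistency check.
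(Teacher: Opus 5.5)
Your outline has the same architecture as the paper's proof. You separate the collections with $k_\delta=0$, identify their series $P'(t;\LLL)$ with the curve series, and then prove $a_v(\LLL)=a'_v(\LLL)+\LLL^{[\K_s:\K_0]}a_{v-M_\delta}(\LLL)$; unrolled, this is Lemma~\ref{lemma:3_in_divis}. But in each step the ingredient that makes the step work is missing.

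In the first step, the curve argument does not apply verbatim. For $\nu_\delta$ the quotient $J^{\C}(v)/J^{\C}(v+1)$ is in general not one-dimensional. So $F^{\{k_\sigma\}}_\delta$ is not a priori a set of leading coefficients, and its $\K$-span need not have the dimension computed in Section~\ref{sec:general_curve}. The paper's bridge (Lemma~\ref{lemma:1_in_divis}) is the following: for $k_\delta=0$, the ratio $\widetilde{f}_1/\widetilde{f}_2$ of the liftings of two germs with the same collection is a non-zero constant on $E_\delta$. This identifies $F^{\{k_\sigma\}}_\delta$ $\C$-linearly with $F^{\{k_\sigma\}}_C$ for a $\K_s$-curvette $C$ at $E_\delta$, which preserves $\K$-dimensions. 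Hence $P'=P_C$, and Theorem~\ref{theo:for_curve} applied to $C$ gives the curve factor without rerunning the induction.

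In the second step, what is needed is not that the maximum is attained at maximal $k_\delta$, but the exact count. For $v=\ell M_\delta+b$ with $b\in S^{\K}_C$ and $b-M_\delta\notin S^{\K}_C$, one has $a_v=\ell[\K_s:\K_0]+a'_b$ (Lemma~\ref{lemma:2_in_divis}). This is Lemma~6 of~\cite{Extensions} as it stands, so no new analogue has to be proved. Your description of germs with $k_\delta=k$ as products of $G_\delta$-curvettes with a germ of the previous type is not literally true, and it is not needed.

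Moreover, $a_v=[\K_s:\K_0]+a_{v-M_\delta}$ does not by itself give your $\LLL$-recursion. Since $a_v(\LLL)$ depends only on $a_v$, for $v-M_\delta\in S^{\K}_C$ the recursion is equivalent to that equality together with $a'_v=[\K_s:\K_0]$. In other words, your ``part with smaller support'' must be exactly a $\P^{[\K_s:\K_0]-1}$. This is the fact the paper invokes at the end of the proof of Lemma~\ref{lemma:3_in_divis}. It holds for three reasons:
\begin{enumerate}
\item[1)] $a'_v\le[\K_s:\K_0]$, by Lemma~\ref{lemma:st-2_for_Theo_curve} with $j=s+1$;
\item[2)] equality holds at $v=M_\delta$, by Lemma~\ref{lemma:st-3_for_Theo_curve} with $j=s$, $\varkappa=\delta$;
\item[3)] equality propagates to $M_\delta+S^{\K}_C$ by multiplying with a fixed germ of value $v-M_\delta$ having $k_\delta=0$.
\end{enumerate}
Alternatively, the classical formula of Theorem~\ref{theo:from_Extensions}, $a_v=a'_v+a_{v-M_\delta}$, yields $a'_v=[\K_s:\K_0]$ from your dimension identity. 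So that formula is an input to your argument, not merely a consistency check. The case $v-M_\delta\notin S^{\K}_C$ that you list as still to verify is immediate: every collection with $k_\delta>0$ has value in $M_\delta+S^{\K}_C$.
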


 \begin{proof}
 Like in Section~\ref{sec:general_curve}, let $F^{\{k_\sigma\}}_{\delta}$ be the image of 
 $\EE^{\{k_\sigma\}}_{\K^2,0}$ in the quotient 
 $J^{\C}(v)/J^{\C}(v+1)$ with $ v = \nu(\{k_\sigma\})$ and let $\overline{F}^{\{k_\sigma\}}_{\delta}$
 be the linear span of $F^{\{k_\sigma\}}_{\delta}$ over $\K$. Pay attention that, in general, 
 $J^{\C}(v)/J^{\C}(v+1)$ is not one-dimensional over $\C$. Let 
 $d^{\{k_\sigma\}}_\delta = \dim_{\K} \overline{F}^{\{k_\sigma\}}_{\delta}$. 
 
Let $C$ be a $\K_s$-curvette at the component $E_{\delta}$.
As usual, for the coefficient $a_v(\LLL)$ in the generalized Poincar\'e series
$P_{\nu}(t;\LLL)=\sum_{v=0}^{\infty} a_v(\LLL) t^v$ one has
$a_v(\LLL)=\frac{\LLL^{a_v}-1}{\LLL-1}$ with
$a_v := \max d_{\delta}^{\{k_\sigma\}}$ where the maximum is taken over the collections
$\{k_\sigma\}$ such that $\nu(\{k_\sigma\}) = \sum k_\sigma M_\sigma=v$.
(If there are no collections $\{k_\sigma\}$ with $\nu(\{k_\sigma\})=v$,
$a_v(\LLL)=a_v=0$.)
Let $A'$ be the set of collections $\{k_\sigma\}$ such
that $k_\delta=0$ and let
$P'(t;\LLL):= \sum a'_{v}(\LLL)t^v$, where
$$
a'_v(\LLL)= \frac{\LLL^{a'_v}-1}{\LLL-1}, \quad
a'_v = \max_{\{k_\sigma\}\in A':\nu(\{k_\sigma\}) = v} d_{\delta}^{\{k_\sigma\}}\,.
$$

\begin{lemma}\label{lemma:1_in_divis}
 One has
 \begin{equation*}
P'(t;\LLL) = P_C(t:\LLL)\,.
\end{equation*}
\end{lemma}

\begin{proof}
For two functions $f_1$ and $f_2$ from $\EE^{\{k_{\sigma}\}}_{\K^2,0}$, $\{k_{\sigma}\}\in A'$,
the ratio $\Psi=\widetilde{f}_1/\widetilde{f}_2$ of their liftings to the surface
of modification is a non-zero constant on the component $E_{\delta}$ of the exceptional divisor. This permits to identify the sets $F^{\{k_{\sigma}\}}_C$ and
$F^{\{k_{\sigma}\}}_{\delta}$. Therefore the statement is a direct consequence of
Remark~9 in~\cite{Extensions}.
\end{proof}

\begin{lemma}\label{lemma:2_in_divis}
  Let $v\in S^{\K}_C$ be of the form
  $v=\ell M_{\delta}+b$ with $b\in S^{\K}_C$ and $b-M_{\delta}\notin S^{\K}_C$.
  Then
  $$
  a_v(\LLL)=\frac{\L^{\ell[\K_s:\K_0]}-1}{\LLL-1}+\LLL^{\ell[\K_s:\K_0]}a_b'(\LLL)\,.
  $$
 \end{lemma}

 This is a direct consequence of Lemma~6 in~\cite{Extensions}.

 \begin{lemma}\label{lemma:3_in_divis}
 One has
 \begin{equation}\label{eqn:in_lemma_3_in_divis}
 P_{\nu}(t;\LLL)={P'}(t; \L)\cdot\frac{1}{1-\LLL^{[\K_s:\K_0]}t^{M_{\delta}}}\,.
 \end{equation}
 \end{lemma}

 \begin{proof}
 Equation~(\ref{eqn:in_lemma_3_in_divis}) is equivalent to
 \begin{equation}\label{eqn:in_lemma_3_in_divis_exp}
  a_v(\LLL)=a_v'(\LLL)+
  \LLL^{[K_{s}:K_0]}a_{v-M_{\delta}}'(\LLL)+
  \LLL^{2[K_{s}:K_0]}a_{v-2M_{\delta}}'(\LLL)+\ldots
 \end{equation}
 Let $v\in S^{\K}$ be of the form $\ell M_{\delta}+b$ with $b\in S^{\K}$,
 $b-M_{\delta}\notin S^{\K}$. In this case Equation~(\ref{eqn:in_lemma_3_in_divis_exp}) reduces to
 \begin{equation*}
  a_v(\LLL)=a_v'(\LLL)+
  \LLL^{[K_{s}:K_0]}a_{v-M_{\delta}}'(\LLL)+
  %\LLL^{2[K_{s}:K_0]}a_{v-2M_{\delta}}'(\LLL)+
  \ldots +
  \LLL^{(\ell-1)[K_{s}:K_0]}a_{v-(\ell-1)M_{\delta}}'(\LLL)+
  \LLL^{\ell[K_{s}:K_0]}a_{b}'(\LLL)\,.
 \end{equation*}
 The latter is a direct consequence of Lemma~\ref{lemma:2_in_divis}
 with the fact that
 $a_{v-\ell'M_{\delta}}(\LLL)=1+\LLL+\LLL^2+\ldots\LLL^{[K_{s}:K_0]-1}$.
 \end{proof}

Lemmas~\ref{lemma:1_in_divis} and~\ref{lemma:3_in_divis} prove the statement of
Theorem~\ref{theo:for_divisorial}
 \end{proof}

 \begin{remark}
 In Introduction it was mentioned that the equation for the generalized Poincar\'e
 series for a divisorial valuation makes sense also for $\K=\C$, i.\,e.,
 for a valuation on $\OO_{\CC^2,0}$. In this case Equation~(\ref{eqn:theo_divisor}) tends to
 $$
 P(t;\L)=\frac{\prod_{i=1}^g(1-t^{m_{\tau_i}})}{\prod_{i=0}^g(1-t^{m_{\sigma_i}})}
 \cdot\frac{1}{1-\LLL t^{m_{\delta}}}\,.
 $$
 As an example, let us take the divisorial valuations whose resolution graphs
 are shown in Figure~{\ref{figlast}}.
 \begin{figure}[h]
$$
\unitlength=1.00mm
\begin{picture}(120.00,25.00)(10,5)
%%%%%%%%%%%%%%%%%%%%%%%%%

\thinlines

\put(10,25){\line(1,0){40}}

\put(10,25){\circle*{2}}
\put(5,20){{${\bf \sigma_0}$}}
\put(30,25){\circle*{2}}
\put(30,25){\line(0,-1){20}}
\put(30,5){\circle*{2}}
\put(25,20){{${\bf \tau_1}$}}
\put(24,5){{${\bf \sigma_1}$}}

\put(50,25){\circle{3}}
\put(50,25){\circle*{1}}

\put(50,25){\line(0,-1){20}}
\put(50,5){\circle*{2}}
%\put(50,5){\circle{3}}
\put(44,28){{${\bf \tau_2=\delta}$}}
\put(44,5){{${\bf \sigma_2}$}}

%SECOND

\put(70,25){\line(1,0){60}}

\put(70,25){\circle*{2}}
\put(65,20){{${\bf \sigma_0}$}}
\put(90,25){\circle*{2}}
\put(90,25){\line(0,-1){20}}
\put(90,5){\circle*{2}}
\put(85,20){{${\bf \tau_1}$}}
\put(84,5){{${\bf \sigma_1}$}}

\put(110,25){\circle*{2}}

\put(110,25){\line(0,-1){20}}
\put(110,5){\circle*{2}}
\put(104,20){{${\bf \tau_2}$}}
\put(104,5){{${\bf \sigma_2}$}}

%\put(110,25){\circle{3}}
%\put(110,25){\circle*{1}}

\put(130,25){\circle{3}}
\put(130,25){\circle*{1}}
%
%\put(69,20){{${\bf \delta}$}}

\put(129,18){{${\bf \delta}$}}

\end{picture}
$$
\caption{Resolution graphs of the divisorial valuations under consideration}
\label{figlast}
\end{figure}
 For each of them the corresponding curvetta
 is of the form $x=t^4$, $y=t^6+t^7$. For the first one, one has
 $$
 P(t)=\frac{(1-t^{12})}{(1-t^4)(1-t^6)(1-t^{13})}\,.\quad
 P(t;\LLL)=\frac{(1-t^{12})(1-t^{26})}{(1-t^4)(1-t^6)(1-t^{13})(1-\LLL t^{26})}\,.
 $$
 For the second one,
 $$
 P(t)=\frac{(1-t^{12})(1-t^{26})}{(1-t^4)(1-t^6)(1-t^{13})(1-t^{27})}\,.\quad
 $$
 $$
 P(t;\LLL)=\frac{(1-t^{12})(1-t^{26})}{(1-t^4)(1-t^6)(1-t^{13})(1-\LLL t^{27})}\,.
 $$
 
 This example supports the opinion that it is difficult to formulate a general rule to get the series $P(t;\L)$ of the form~(\ref{eqn:gen_ACampo_form})  from a representation of the series 
 $P(t)$ in the form~(\ref{eqn:ACampo_form}). 
 
 \end{remark}

Addresses:

F. Delgado:
IMUVA (Instituto de Investigaci\'on en
Matem\'aticas), Universidad de Valladolid,
Paseo de Bel\'en, 7, 47011 Valladolid, Spain.
\newline E-mail: fdelgado\symbol{'100}uva.es

S.M. Gusein-Zade:
Moscow State University, Faculty of Mathematics and Mechanics, Leninskie Gory 1, Moscow, GSP-1, 119991, Russia.\\
\& National Research University ``Higher School of Economics'',
Usacheva street 6, Moscow, 119048, Russia.
\newline E-mail: sabir\symbol{'100}mccme.ru

\end{document}